 \definecolor{dark-red}{rgb}{0.4,0.15,0.15}
\newcommand{\RR}{\mathcal{R}}
\DeclareMathAlphabet{\mathcal}{OMS}{cmsy}{m}{n}
\newcommand{\Sub}{\operatorname{Sub}}
\numberwithin{equation}{section} 
\theoremstyle{plain}
\newaliascnt{theorem}{equation}  
\newtheorem{theorem}[theorem]{Theorem}  
\newaliascnt{dodeca}{equation}  
 \theoremstyle{definition}
\newaliascnt{prop}{equation}  
\newtheorem{prop}[prop]{Proposition}
\newaliascnt{lemma}{equation}  
\newtheorem{lemma}[lemma]{Lemma}
\newaliascnt{corollary}{equation}  
\newtheorem{corollary}[corollary]{Corollary}
\newaliascnt{claim}{equation}  
\newaliascnt{conjecture}{equation}  
\newtheorem{conjecture}[conjecture]{Conjecture}
\newaliascnt{question}{equation}  
\newaliascnt{defn}{equation}  
\newtheorem{defn}[defn]{Definition}
\newaliascnt{example}{equation}  
\newtheorem{example}[example]{Example}
\newaliascnt{computation}{equation}  
\newtheorem{computation}[computation]{Computation}
\newaliascnt{construction}{equation}  
\newtheorem{construction}[construction]{Construction}
\theoremstyle{remark}
\newaliascnt{remark}{equation}  
\newtheorem{remark}[remark]{Remark}
\newaliascnt{convention}{equation}  
\theoremstyle{plain}
\definecolor{gBlue}{HTML}{2b83ba}
\definecolor{gRed}{HTML}{ff5100}
\begin{document}
\title{The combinatorics of $N_\infty$ operads for $C_{qp^n}$ and $D_{p^n}$}

\author{Scott Balchin}
\address{Queen's University Belfast}
\email{s.balchin@qub.ac.uk}

\author{Ethan MacBrough}
\address{Reed College}
\email{emacbrough@reed.edu}

\author{Kyle Ormsby}
\address{Reed College / University of Washington}
\email{ormsbyk@reed.edu \textnormal{/} ormsbyk@uw.edu}

\begin{abstract}
We provide a general recursive method for constructing transfer systems on finite lattices. Using this we calculate the number of homotopically distinct $N_\infty$ operads for dihedral groups $D_{p^n}$, $p > 2$ prime, and cyclic groups $C_{qp^n}$, $p \neq q$ prime. We then further display some of the beautiful combinatorics obtained by restricting to certain homotopically meaningful $N_\infty$ operads for these groups.
\end{abstract}

\maketitle

\setcounter{tocdepth}{1}
\tableofcontents

\section{Introduction}

Throughout, $D_{p^n}$ denotes the dihedral group of order $2p^n$, $p>2$ prime, and $C_{qp^n}$ the cyclic group of order $qp^n$, for $q,p$ distinct primes. (We allow $p=2$ in the cyclic case.)

This paper --- at its heart --- is concerned with the combinatorial data arising when studying commutative $G$-equivariant spectra for $G$ either $D_{p^n}$ or $C_{qp^n}$. In particular, we wish to study the set of $N_\infty$ operads for these groups, as introduced by Blumberg--Hill~\cite{BlumbergHill}, which govern the ways in which the commutativity respects the group action.

Such an exploration of $N_\infty$ operads for $G= C_{p^n}$ was undertaken in~\cite{bbr}, where it was proved that the collection of these operads (up to homotopy) was in bijection with the Tamari lattice, and in particular, there are $\mathrm{Cat}(n+1)$ many of them, where $\mathrm{Cat}(k)$ is the $k$-th Catalan number. This was achieved using an explicit representation of $N_\infty$ operads as combinatorial structures called \emph{transfer systems} on the subgroup lattice of $C_{p^n}$. As such, one is led to study transfer systems on arbitrary finite lattices. We refer the reader to \cite[\S 4]{fooqw} and \autoref{defn:cattran} for definitions in this context.

Note that the subgroup lattice for $C_{p^n}$ is isomorphic to $[n] = \{0<1<\cdots<n\}$. A key method employed in~\cite{bbr} was a recursive method for building all transfer systems on $[n]$ from ones on $[i]$ where $i < n$. It is this powerful point of view that we will generalize in this paper. In particular, in \autoref{sec:strategy} we shall exhibit an extremely general method of iteratively building all transfer systems on a finite lattice $L$.

We will then implement the algorithm of~\autoref{sec:strategy} in the case of $L = [1] \times [n]$. From a homotopical viewpoint, we observe that $[1] \times [n] \cong \Sub(C_{qp^n})$, and as such, transfer systems on $[1] \times [n]$ correspond to $N_\infty$ operads for $C_{qp^n}$. To do so, we will first begin by considering a restricted collection of transfer systems, namely the \emph{liftable} transfer systems (\autoref{defn:liftable}). From the work of the authors in~\cite{bmometa}, liftable transfer systems on $[1] \times [n]$ are in bijection with $N_\infty$ operads for $D_{p^n}$ (provided $p \neq 2$). A recursive formula for computing the number of liftable transfer systems appears as the main result in \autoref{thm:liftable}.

We are then able to exploit the self duality of transfer systems on $[1] \times [n]$ as observed in~\cite[Theorem 4.21]{fooqw} to complete the computation for $C_{qp^n}$ almost immediately from the case of $D_{p^n}$. This leads to \autoref{thm:nonliftable} where we provide a recursive formula for computing the number of transfer systems on $[1] \times [n]$.\footnote{\texttt{Python}  code for computing the number of $N_\infty$ operads for $C_{qp^n}$ and $D_{p^n}$ using the results of this paper can be found at
\url{https://github.com/bifibrant/recursion/}.}  Thus, we obtain a (recursive) formula for a second and third infinite family of groups following the work of \cite{bbr}\footnote{Since the writing of this paper, a formula for transfer systems for a fourth infinite family of groups, namely groups of the form $C_p \times C_p$ for $p$ prime, was obtained in \cite{bao2023transfer}}.

Following these enumerative results, we begin to explore some of the combinatorial structures appearing in the recursions that we have provided. In \autoref{sec:restrict} we study the notion of \emph{restricted Tamari intervals}, which form an important part of the recursions for $C_{qp^n}$ and $D_{p^n}$, and see that this is related to certain triangulations of polygons.

Finally, in \autoref{sec:max1} we will restrict ourselves to what we call \emph{maximally extendable} transfer systems. In terms of the lattice $[1] \times [n]$, these are those transfer systems whose restriction to the bottom row is the maximal transfer system on $[n]$. We shall see that in the liftable case these have specific combinatorial interpretation (in terms of large Schr\"oder numbers), and we make a conjecture relating the general case to rooted subtrees of a rooted planar tree. Returning to the world of homotopy theory, these maximally extendable transfer systems have a meaninful interpretation. Indeed, they are precisely those transfer systems cooresponding to $N_\infty$ operads which restrict to the genuine $G$-$E_\infty$ operad on $C_{p^n}$.

\subsection*{Acknowledgements}

The authors thank Mike Hill for suggesting the construction presented in \autoref{sec:strategy}. They also thank Ang\'elica Osorno and the anonymous referees for helpful feedback.

The first author would like to thank the Max Planck Institute for Mathematics for its hospitality, and was partially supported by the European Research Council (ERC) under Horizon Europe (grant No. 101042990). The second author thanks Coil Technologies for their generous donation to fund his tuition, which enabled him to conduct this research. The third author's work was supported by the National Science Foundation under Grant No.~DMS-2204365.

Competing interests: the authors declare none.

\section{Transfer systems and a generalized $\odot$ construction}\label{sec:strategy}

We begin by recalling the definition of a transfer system on a poset from~\cite{bmometa,fooqw}.

	\begin{defn}\label{defn:cattran}
Let $\mathcal{P} = (\mathcal{P}, \leqslant)$ be a poset. A (categorical) \emph{transfer system} on $\mathcal{P}$ consists of a partial order $\RR$ on $\mathcal{P}$ that refines $\leqslant$ and such that whenever $x \, \RR \, y$ and $z\leqslant y$, then for all maximal $w\in x^{\downarrow}\cap z^{\downarrow}$ we have $w \, \RR \, z$ where $x^{\downarrow}$ denotes the set of all $y \leqslant x$.
\end{defn}

For a lattice $L$, we write $\mathsf{Tr}(L)$ for the collection of all transfer systems of $L$. In~\cite{bbr} it was proved that $|\mathsf{Tr}([n])|$ coincides with the $(n+1)$-th Catalan number. The proof for this proceeded in a recursive fashion which was governed by an operation denoted $\odot$ in loc.~cit.

Let $\RR$ be a transfer system on $[n]$. Denote by $x$ the minimal element such that $x\, \RR\, n$, and consider the partition of $[n]$ as $[0,x-1] \amalg [x,n]$. The key observation is that the restriction of $\RR$ to $[0,x-1]$ and $[x,n]$ yields two transfer systems, and moreover, $\RR$ can be recovered from the disjoint union of these two transfer systems. To retrieve the aforementioned $\odot$ operation from this, we note that the \emph{pivot element} in~\cite{bbr} is simply $x$.

In this section we will follow a suggestion of Mike Hill to extend this observation to transfer systems on an arbitrary finite lattice $L$, which provides a method for producing recursive formulas for counting transfer systems. To this end, let $L$ be a finite lattice equipped with a transfer system $\RR$. Let $m$ be the maximal element of $L$, we will say that $x \in L$ is \emph{fibrant} if $x\, \RR\, m$. (This terminology is inspired by the corresponding statement under the equivalence of transfer systems to weak factorization systems.)

\begin{lemma}
Let $L$ and $\RR$ be as above. Then $\RR$ has a unique minimal fibrant element.
\end{lemma}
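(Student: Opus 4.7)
The plan is to show that the set $F = \{x \in L : x\,\RR\,m\}$ of fibrant elements is nonempty and closed under the lattice meet $\wedge$, so that the meet of all its elements provides the unique minimum.

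First I would observe that $F$ is nonempty because $m\,\RR\,m$ (partial orders are reflexive), so $m \in F$. The main content is then showing $F$ is closed under binary meets. Suppose $x, y \in F$, so that $x\,\RR\, m$ and $y\,\RR\,m$. I would apply \autoref{defn:cattran} to the pair $x\,\RR\,m$ together with $z := y \leqslant m$: the axiom then says that for every maximal $w \in x^{\downarrow} \cap y^{\downarrow}$ we have $w\,\RR\,y$. Here is where being in a lattice is essential: $x^{\downarrow} \cap y^{\downarrow} = (x\wedge y)^{\downarrow}$, whose unique maximum is $x\wedge y$. Hence $x \wedge y\,\RR\,y$, and combining with $y\,\RR\,m$ and transitivity of $\RR$ gives $x\wedge y\,\RR\, m$, i.e.\ $x\wedge y \in F$.

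With $F$ nonempty, finite, and closed under $\wedge$, the meet $\bigwedge_{x \in F} x$ lies in $F$ and is by construction the unique minimum. Equivalently, any two minimal fibrant elements $x, y$ would have $x\wedge y \in F$ and $x\wedge y \leqslant x, y$, forcing $x = x\wedge y = y$ by minimality.

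The step I expect to need the most care is the use of the transfer system axiom, specifically the passage from the condition on maximal elements of $x^{\downarrow}\cap z^{\downarrow}$ to the single element $x\wedge y$; this is the only place the lattice hypothesis enters, and it is precisely what makes the whole meet argument go through. Everything else (reflexivity, transitivity, finiteness) is formal, so no serious obstacle is anticipated.
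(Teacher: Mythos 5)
Your proposal is correct and follows essentially the same route as the paper: the paper's proof simply takes the meet of all fibrant elements and asserts it is fibrant ``by the definition of transfer systems,'' which is exactly the step you spell out via the restriction axiom applied to $x\,\RR\,m$ and $z=y$, using that $x^{\downarrow}\cap y^{\downarrow}=(x\wedge y)^{\downarrow}$ in a lattice. Your version just makes explicit the nonemptiness, the reduction to binary meets, and the use of transitivity that the paper leaves implicit.
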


\begin{proof}
Let $\{x_i\}_i$ be the collection of fibrant elements of $\RR$. Then by the definition of transfer systems, it follows that $(\bigwedge_i x_i)\, \RR\, m$. In particular, $\bigwedge_i x_i$ is a fibrant element and $(\bigwedge_i x_i)  \leqslant x_j$ for all $j$ as required.
\end{proof}

\begin{defn}
Let $L$ be a lattice, and $x \in L$. We denote by $x_{\uparrow}$ the set of all $y \geqslant x$. The set theoretic complement of $x_{\uparrow}$ in $L$ will be denoted $x_{\uparrow}^\mathsf{c}$.
\end{defn}

The following is a standard result in the theory of finite lattices.

\begin{lemma}\label{lem:restriction}
Let $L$ be a finite lattice and $x \in L$. Then the collection $x_{\uparrow}$ is a sublattice of $L$, and $x_{\uparrow}^\mathsf{c}$ is a sub-meet-semilattice of $L$.
\end{lemma}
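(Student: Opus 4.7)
The plan is to verify the two closure properties directly from the definitions, as this is essentially elementary lattice theory. The first part requires showing $x_{\uparrow}$ is closed under both $\wedge$ and $\vee$, while the second part requires showing $x_{\uparrow}^{\mathsf{c}}$ is closed under $\wedge$.

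For the first part, I would take $y, z \in x_{\uparrow}$, so $y \geqslant x$ and $z \geqslant x$. For the join, $y \vee z \geqslant y \geqslant x$, so $y \vee z \in x_{\uparrow}$. For the meet, since $x$ is a common lower bound of $y$ and $z$, the universal property of the meet gives $x \leqslant y \wedge z$, so $y \wedge z \in x_{\uparrow}$. Hence $x_{\uparrow}$ is closed under both operations, making it a sublattice.

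For the second part, I would proceed by contrapositive. Suppose $y, z \in x_{\uparrow}^{\mathsf{c}}$, i.e., $y \not\geqslant x$ and $z \not\geqslant x$. If we had $y \wedge z \geqslant x$, then since $y \wedge z \leqslant y$ we would conclude $x \leqslant y$, contradicting the assumption. Therefore $y \wedge z \in x_{\uparrow}^{\mathsf{c}}$, showing that $x_{\uparrow}^{\mathsf{c}}$ is closed under meets. Note that we make no claim about closure under joins, which is exactly why this is only a sub-meet-semilattice and not a sublattice in general (e.g., in $[1] \times [1]$ with $x = (1,1)$, the two coatoms lie in $x_{\uparrow}^{\mathsf{c}}$ but their join does not).

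There is no real obstacle here; the result is a straightforward unwinding of definitions. The only thing worth flagging is that one must carefully distinguish meets from joins in the second part, where only one of the two is preserved.
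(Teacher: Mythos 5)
Your proof is correct; the paper itself states this lemma without proof, labeling it a standard fact, and your direct verification (joins and meets preserved in $x_{\uparrow}$ via upper/lower bound arguments, meets preserved in $x_{\uparrow}^{\mathsf{c}}$ by contradiction) is exactly the intended standard argument. The counterexample in $[1]\times[1]$ showing joins need not be preserved in $x_{\uparrow}^{\mathsf{c}}$ is a nice touch that correctly explains why only a sub-meet-semilattice is claimed.
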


Via \autoref{lem:restriction}, one sees that there is a well-defined notion of transfer system for $x_{\uparrow} $ and $x_{\uparrow}^\mathsf{c}$. Indeed, the axioms for a categorical transfer system only depend on the meet operation, so we can use the same definition for meet-semilattices like $x_\uparrow^{\mathsf{c}}$.  In particular, for $L$ a finite lattice and $\RR$ a transfer system on $L$, if $x$ is the unique minimal fibrant element, then $\RR$ restricts to transfer systems $\RR_2$ and $\RR_1$ on $x_{\uparrow}$ and $x_{\uparrow}^\mathsf{c}$, respectively. 

Given $\RR_1$ and $\RR_2$ as above, we can construct a candidate transfer system $\RR_1 \amalg \RR_2$ by simply taking their disjoint union over $L$. The following lemma proves that this retrieves the starting transfer system $\RR$.

\begin{lemma}
Let $\RR$, $\RR_1$, and $\RR_2$ be as above. Then $\RR = \RR_1 \amalg \RR_2$.
\end{lemma}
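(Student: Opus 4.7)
The plan is to prove $\RR = \RR_1 \amalg \RR_2$ by double inclusion. The easy direction $\RR_1 \amalg \RR_2 \subseteq \RR$ is immediate from the definitions, since by construction $\RR_1$ and $\RR_2$ are the restrictions of $\RR$ to $x_\uparrow^{\mathsf{c}}$ and $x_\uparrow$, respectively, so every pair in $\RR_1 \amalg \RR_2$ is a pair in $\RR$ whose endpoints happen to lie in the same piece of the partition $L = x_\uparrow^{\mathsf{c}} \amalg x_\uparrow$.

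For the reverse inclusion, the task reduces to showing that whenever $a \, \RR \, b$, both $a$ and $b$ lie on the same side of the partition, so that the pair already belongs to $\RR_1$ or $\RR_2$. Since $\RR$ refines $\leqslant$, we have $a \leqslant b$, and this immediately rules out $a \in x_\uparrow$ with $b \in x_\uparrow^{\mathsf{c}}$. The main obstacle—really the only content of the lemma—is to rule out the opposite mixed case $a \in x_\uparrow^{\mathsf{c}}$ and $b \in x_\uparrow$, which is where the minimality of $x$ among fibrant elements must be exploited.

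To dispose of that case, I would argue by contradiction: assume $a \, \RR \, b$ with $a \not\geqslant x$ and $b \geqslant x$. The natural move is to apply the transfer system axiom of \autoref{defn:cattran} to the relation $a \, \RR \, b$ and the element $x \leqslant b$. Because $L$ is a finite lattice, $a^{\downarrow} \cap x^{\downarrow}$ has a unique maximum, namely $a \wedge x$, so the axiom yields $(a \wedge x) \, \RR \, x$. Composing with $x \, \RR \, m$ using transitivity of the partial order $\RR$ produces $(a \wedge x) \, \RR \, m$, exhibiting $a \wedge x$ as a fibrant element. Minimality of $x$ (from the previous lemma) forces $x \leqslant a \wedge x$, and combined with the tautological $a \wedge x \leqslant x$ this gives $a \wedge x = x$, i.e., $a \geqslant x$—contradicting $a \in x_\uparrow^{\mathsf{c}}$. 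Modulo this key restriction-axiom step, the rest is formal.
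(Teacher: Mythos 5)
Your proposal is correct and follows essentially the same route as the paper: both reduce the lemma to ruling out a relation $a\,\RR\,b$ with $a\in x_\uparrow^{\mathsf c}$ and $b\in x_\uparrow$, apply the restriction axiom at $x\leqslant b$ to obtain $(a\wedge x)\,\RR\,x$, compose with $x\,\RR\,m$ to exhibit $a\wedge x$ as fibrant, and derive a contradiction with the minimality of $x$. Your write-up is slightly more explicit about the trivial inclusion and the other mixed case, but the mathematical content is identical.
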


\begin{proof}
Assume that there is some $y \in x_\uparrow^{\mathsf{c}}$ and $z \in x_\uparrow$ such that $y\, \RR\, z$. By the property of a transfer system we have $(x \wedge y )\, \RR\, x$, and by transitivity we have $(x \wedge y )\, \RR\, x \RR\, m$, that is, $x \wedge y $ is fibrant. However, as $y \in x_\uparrow^{\mathsf{c}}$, $(x \wedge y ) < x$, violating the assumed minimality of $x$.
\end{proof}

\begin{example}\label{ex:1x3trans}
Let $L = [1] \times [3]$, and consider the following transfer system.\footnote{\textbf{{\fontencoding{U}\fontfamily{futs}\selectfont\char 66\relax} Warning}: For typographical reasons, we plot the first coordinate vertically and the second coordinate horizontally; this same convention is held in all our diagrams and nomenclature (especially \emph{bottom row}, \emph{top row}, \emph{verticals}, and \emph{diagonals}.}
\[
\begin{tikzpicture}
		\node[circle,draw=black, fill=black, inner sep=0pt, minimum size=5pt] (00) at (0,0) {};
		\node[circle,draw=black, fill=red, inner sep=0pt, minimum size=5pt] (10) at (1,0) {};
		\node[circle,draw=black, fill=black, inner sep=0pt, minimum size=5pt] (01) at (0,1) {};
		\node[circle,draw=black, fill=black, inner sep=0pt, minimum size=5pt] (11) at (1,1) {};
		\node[circle,draw=black, fill=black, inner sep=0pt, minimum size=5pt] (20) at (2,0) {};
		\node[circle,draw=black, fill=black, inner sep=0pt, minimum size=5pt] (21) at (2,1) {};
		\node[circle,draw=black, fill=black, inner sep=0pt, minimum size=5pt] (30) at (3,0) {};
		\node[circle,draw=black, fill=black, inner sep=0pt, minimum size=5pt] (31) at (3,1) {};
		\draw[->>] (00) -- (01);
		\draw[->>] (10) -- (11);
		\draw[->>] (20) -- (30);
		\draw[->>] (21) -- (31);
		\draw[->>] (10) -- (20);
		\draw[->>] (10) to [bend right] (30);
		\draw[->>] (10) -- (21);
		\draw[->>] (10) -- (31);
		\end{tikzpicture}
\]
The minimal fibrant element $(0,1)$ is highlighted in red. One clearly sees pictorially that $\RR$ splits as a transfer system $\RR_2$ on $(0,1)_{\uparrow} \cong [1] \times [2]$ and a transfer system $\RR_1$ on $(0,1)_{\uparrow}^{\mathsf{c}} \cong [1] \times [0] \cong [1]$.
\end{example}

As such, every transfer system $\RR$ on $L$ can be split into a pair of transfer systems $\RR_1, \RR_2$ on sub-(semi-)lattices $x_\uparrow^{\mathsf{c}}$ and $x_\uparrow$ of $L$ where $\RR_2$ has minimal fibrant element $x$.  In the converse direction, given two transfer systems $\RR_1$ and $\RR_2$ on $x_\uparrow^\mathsf{c}$ and $x_\uparrow$ respectively, we can ask when $\RR_1 \amalg \RR_2$ is a transfer system for $L$. This happens if and only if the relations in $\RR_2$ when restricted to $x_{\uparrow}^\mathsf{c}$ (i.e., those relations that are forced by the relations of $\RR_2$ under the restriction property of a transfer system on the larger lattice $L$) are relations in $\RR_1$. We will say that such an $\RR_1$ and $\RR_2$ is a \emph{restriction closed pair} over $L$.

\begin{example}
If we modify \autoref{ex:1x3trans} so that $\RR_1$ is the empty transfer system on $[1]$, then this pair is not restriction closed and $\RR_1 \amalg \RR_2$ does not form a transfer system for $[1] \times [3]$.
\end{example}

From this discussion, we can form the basis of a recursive formula for computing $\mathsf{Tr}(L)$ based on the geometry of $L$. We summarise this in the following theorem.

\begin{theorem}
Let $L$ be a finite lattice. Then there is a bijection between transfer systems on $L$ and triples $(x,\RR_1,\RR_2)$ where $x\in L$, $\RR_1$ is a transfer system on $x_\uparrow^{\mathsf c}$, $\RR_2$ is a transfer system on $x_\uparrow$ with minimal fibrant element $x$, and $\RR_1,\RR_2$ form a restriction closed pair.
\end{theorem}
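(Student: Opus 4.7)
The plan is to promote the three preceding lemmas into mutually inverse maps between $\mathsf{Tr}(L)$ and the set of triples $(x,\RR_1,\RR_2)$ of the stated form.

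For the forward direction, I would send $\RR$ to $(x, \RR|_{x_\uparrow^\mathsf{c}}, \RR|_{x_\uparrow})$, where $x$ is the unique minimal fibrant element produced by the first lemma of this section. By \autoref{lem:restriction}, $x_\uparrow$ is a sublattice and $x_\uparrow^\mathsf{c}$ is a sub-meet-semilattice, and since the transfer system axioms only involve the meet operation (inherited from $L$), the two restrictions are honest transfer systems. By construction $x$ is the minimal fibrant element of $\RR|_{x_\uparrow}$. Restriction-closedness is automatic: any relation in $\RR|_{x_\uparrow}$ whose descent along the transfer-system axioms for $L$ forces a relation among elements of $x_\uparrow^\mathsf{c}$ must already lie in $\RR|_{x_\uparrow^\mathsf{c}}$, since $\RR$ itself satisfies those axioms.

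For the reverse direction, I would send a triple to $\RR := \RR_1 \amalg \RR_2$ and verify this is a transfer system on $L$. Reflexivity, transitivity, antisymmetry, and refinement of $\leqslant$ hold componentwise because the two supports are disjoint; the only real content is the descent axiom, which I would verify by splitting into cases. If $a \, \RR \, b$ comes from $\RR_1$, then $a,b \in x_\uparrow^\mathsf{c}$, and moreover $a^\downarrow \subseteq x_\uparrow^\mathsf{c}$ (otherwise some $c \leqslant a$ with $c \geqslant x$ would force $a \geqslant x$), and likewise $z \leqslant b$ forces $z \in x_\uparrow^\mathsf{c}$, so $a \wedge z$ is computed inside $x_\uparrow^\mathsf{c}$ and descent follows from $\RR_1$ being a transfer system there. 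If $a \, \RR \, b$ comes from $\RR_2$ then $a,b \in x_\uparrow$; when $z \in x_\uparrow$ we have $a \wedge z \in x_\uparrow$ by sublattice closure so descent is provided by $\RR_2$, and when $z \in x_\uparrow^\mathsf{c}$ we have $a \wedge z \in x_\uparrow^\mathsf{c}$ (else $z \geqslant a \wedge z \geqslant x$ would put $z$ in $x_\uparrow$), and the required relation $(a \wedge z) \, \RR_1 \, z$ is exactly what restriction-closedness guarantees.

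Finally, I would check that the two assignments are mutually inverse. Starting from $\RR$, the third lemma of the section gives $\RR = \RR|_{x_\uparrow^\mathsf{c}} \amalg \RR|_{x_\uparrow}$, so the triple recovers $\RR$. Starting from a triple $(x,\RR_1,\RR_2)$, the fibrant elements of $\RR_1 \amalg \RR_2$ are precisely the fibrant elements of $\RR_2$ (since the maximum $m \in x_\uparrow$ and no $\RR_1$-relation has codomain in $x_\uparrow$), so the minimal fibrant element is $x$; the two restrictions are then tautologically $\RR_1$ and $\RR_2$. I expect the main obstacle to be the descent check for $\RR_2$-relations when $z \in x_\uparrow^\mathsf{c}$, but this is precisely the situation that the restriction-closed pair condition is designed to eliminate, so no further subtlety should arise.
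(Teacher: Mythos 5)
Your proposal is correct and follows essentially the same route as the paper, which states this theorem as a summary of the immediately preceding lemmas (unique minimal fibrant element, restriction to $x_\uparrow$ and $x_\uparrow^{\mathsf c}$, the identity $\RR = \RR_1 \amalg \RR_2$, and the restriction-closed condition) without writing out a separate proof. Your case analysis of the descent axiom for $\RR_1 \amalg \RR_2$ and the check that the two assignments are mutually inverse correctly fill in the details the paper leaves implicit.
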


\begin{remark}
The condition that $\RR_2$ has minimal fibrant element $x$ is equivalent to $\RR_2$ being connected when its relations are considered as edges in an undirected graph.
\end{remark}

\begin{construction}
To rephrase this strategy in the form of an $\odot$ operation, we use the observation from the beginning of this section that the \emph{pivot element} of the $\odot$ operation for $[n]$ was given by the minimal fibrant element of $\RR$. If $L$ is now an arbitrary finite lattice with $\RR$, $\RR_1$ and $\RR_2$ as above, we let $\RR_2 \smallsetminus \{x\}$ be the collection of relations where we have removed the initial element (i.e., we remove the fibrant element $x$). Then we could sensibly define $\RR = \RR_1 \odot (\RR_2 \smallsetminus \{x\}$), where the operation $\odot$ inserts a new element $x$ and adds in the relation $x\, \RR\, m$ and all relations induced by this.

In this notation, one would decompose the $\RR$ of \autoref{ex:1x3trans} as\[
\begin{tikzpicture}
		\node[circle,draw=black, fill=black, inner sep=0pt, minimum size=5pt] (00) at (0,0) {};
		\node[circle,draw=black, fill=black, inner sep=0pt, minimum size=5pt] (01) at (0,1) {};
		\node[circle,draw=black, fill=black, inner sep=0pt, minimum size=5pt] (11) at (2,1) {};
		\node[circle,draw=black, fill=black, inner sep=0pt, minimum size=5pt] (20) at (3,0) {};
		\node[circle,draw=black, fill=black, inner sep=0pt, minimum size=5pt] (21) at (3,1) {};
		\node[circle,draw=black, fill=black, inner sep=0pt, minimum size=5pt] (30) at (4,0) {};
		\node[circle,draw=black, fill=black, inner sep=0pt, minimum size=5pt] (31) at (4,1) {};
		\draw[->>] (00) -- (01);
		\draw[->>] (20) -- (30);
		\draw[->>] (21) -- (31);
		\node[circle,draw=none, fill=none, inner sep=0pt, minimum size=5pt] at (1,0.5) {$\odot$};
		\node[circle,draw=none, fill=none, inner sep=0pt, minimum size=5pt] at (4.4,0.0) {.};
		\end{tikzpicture}
\]
\end{construction}


\section{Enumerating transfer systems for $D_{p^n}$}\label{sec:dihedral}

The eventual goal of this paper is to provide a recursive formula for transfer systems on $[1] \times [n]$ which we realize as the subgroup lattice of $G = C_{qp^n}$ for $p,q$ distinct primes. We will begin with a slightly more tame count, that of \emph{liftable transfer systems} of $[1] \times [n]$ as we now define.
\begin{defn}\label{defn:liftable}
A transfer system $\RR$ on $[1] \times [n]$ is \emph{liftable} if $\RR$ additionally satisfies:
\begin{equation*}
	\text{If $(1,i)\, \RR\, (1, j)$ for $i < j$ then $(0,i)\, \RR\, (1,i)$.}\tag{\emph{L}}
\end{equation*}
We write $L(n)$ to denote the collection of liftable transfer systems on $[1] \times [n]$.
\end{defn}

The relevance of liftable transfer systems in the general machinery of $N_\infty$ operads is provided by the following key result of~\cite{bmometa}. 

\begin{prop}[{\cite[\S 4]{bmometa}}]\label{prop:whatarelift}
Let $n \geqslant 0$. Then there is a bijection between liftable transfer systems on $[1] \times [n]$ and transfer systems for the group $G = D_{p^n}$.
\end{prop}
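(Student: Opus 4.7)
The plan is to identify the conjugacy poset of subgroups of $D_{p^n}$ with the lattice $[1]\times[n]$, and then to show that condition (L) is exactly what the conjugation axiom from the full $G$-equivariant definition of a transfer system contributes beyond the categorical axiom of \autoref{defn:cattran}.

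First, for $p>2$ the subgroups of $D_{p^n}$ split into the normal cyclic rotation subgroups $C_{p^i}$ for $0\leqslant i\leqslant n$, and the dihedral subgroups $D_{p^i}$ for $0\leqslant i\leqslant n$. A direct normalizer calculation using the presentation $\langle \rho, r \mid \rho^{p^n} = r^2 = 1,\, r\rho r = \rho^{-1}\rangle$ shows that $N_{D_{p^n}}(D_{p^i}) = D_{p^i}$ for $p$ odd, so $D_{p^i}$ has exactly $p^{n-i}$ conjugates inside $D_{p^n}$, and any two distinct conjugates of $D_{p^i}$ inside a common $D_{p^j}$ (with $i<j$) intersect in the shared rotation subgroup $C_{p^i}$. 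Taking conjugacy classes yields a lattice, and $(0,i)\mapsto [C_{p^i}]$, $(1,i)\mapsto [D_{p^i}]$ gives the desired isomorphism with $[1]\times[n]$.

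Next, I would unpack the $G$-equivariant transfer system axioms as a reflexive, transitive collection of inclusions $H\leqslant K$ closed under (i) conjugation, $gHg^{-1}\to gKg^{-1}$ whenever $H\to K$, and (ii) pullback, $L\cap gHg^{-1}\to L$ whenever $H\to K$ and $L\leqslant K$. Descending to conjugacy classes, the pullback axiom along the identity element of $G$ is exactly the categorical axiom on $[1]\times[n]$; what remains is to extract the constraint coming from genuinely non-trivial conjugation. Since all $C_{p^i}$ are normal, the only source of such conjugation is the top row. Assuming $[D_{p^i}]\,\RR\,[D_{p^j}]$ with $i<j$, pick representatives $D_{p^i}\leqslant D_{p^j}$ and any $g\in D_{p^j}$ with $gD_{p^i}g^{-1}\neq D_{p^i}$ (possible by non-normality). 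Conjugation gives $gD_{p^i}g^{-1}\to D_{p^j}$, and applying pullback with $L=D_{p^i}$ forces $D_{p^i}\cap gD_{p^i}g^{-1}\to D_{p^i}$; by the intersection calculation above this is $C_{p^i}\to D_{p^i}$, i.e., condition (L). Conversely, given a liftable $\RR$, I would declare $H\to K$ whenever $[H]\,\RR\,[K]$ and check closure under conjugation (automatic since $\RR$ lives on conjugacy classes) and pullback (a finite case analysis over bottom-row/top-row subgroups for $H$, $K$, $L$, using that the only exotic intersections are those already controlled by (L)).

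The main obstacle is the converse case analysis: one must confirm that condition (L), together with the categorical axiom, absorbs every pullback relation $L\cap gHg^{-1}\to L$ that conjugation can produce, including mixed cases where $L$ is a bottom-row subgroup and $H$ a top-row one. This is mechanical once the normalizer structure is in hand and the intersections of conjugate dihedral subgroups are tabulated, but it is exactly the calculation that requires some care; the full verification is carried out in \S 4 of \cite{bmometa}.
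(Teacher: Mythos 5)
The paper does not prove this proposition; it is imported verbatim as a citation to \cite[\S 4]{bmometa}, so there is no in-text argument to compare against. Your sketch is sound and follows the route one would expect that reference to take: the normalizer computation $N_{D_{p^n}}(D_{p^i})=D_{p^i}$ for $p$ odd, the identification $\Sub(D_{p^n})/D_{p^n}\cong[1]\times[n]$, and the derivation of condition (L) by conjugating a non-normal $D_{p^i}$ inside $D_{p^j}$ and pulling back along $D_{p^i}$ to land on $D_{p^i}\cap gD_{p^i}g^{-1}=C_{p^i}$ are all correct, and you rightly flag that the only remaining work is the converse case analysis (which does close up, since the categorical axiom on the quotient lattice together with (L) and transitivity supplies every relation $H\cap L\to L$ that can arise from representatives whose intersection drops from $[H]\wedge[L]=[D_{p^{\min(i,k)}}]$ down to $C_{p^{\min(i,k)}}$). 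Since you, like the paper, ultimately defer that verification to \cite{bmometa}, your write-up is an accurate proof sketch rather than a complete independent proof, but it contains no errors and identifies exactly the right mechanism linking (L) to conjugation.
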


The link between liftable transfer systems and dihedral transfer systems arises via the lattice isomorphism $\Sub(D_{p^n})/D_{p^n} \cong [1] \times [n]$. (Here the quotient is with respect to the conjugation action of $D_{p^n}$ on its subgroups.) Once we have given a recursive formula for $L(n)$, we will, in the following section, exploit the involution on transfer systems as uncovered in~\cite{fooqw} to prove a recursion for $\mathsf{Tr}([1] \times [n])$.

As a warmup, let us see how the liftable condition $(L)$ allows us to enumerate the number of saturated transfer systems in $L(n)$. Recall that a transfer system is said to be \emph{saturated} if it satisfies 2-out-of-3.

	\begin{prop}
There are $(n+2)2^n$ saturated transfer systems for $D_{p^n}$.
	\end{prop}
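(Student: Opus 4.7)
My plan is to apply \aref{prop:whatarelift}, which reduces the problem to counting saturated liftable transfer systems on $[1] \times [n]$. I would parameterize such a transfer system $\RR$ by the three sets
\[
S_B = \{i : (0,i)\,\RR\,(0,i+1)\}, \quad S_T = \{i : (1,i)\,\RR\,(1,i+1)\}, \quad V = \{i : (0,i)\,\RR\,(1,i)\}.
\]
Since a saturated transfer system on the chain $[n]$ is determined by its set of cover relations, $\RR$ can be recovered from $(S_B, S_T, V)$ by taking transitive-and-restriction closure, so it suffices to enumerate valid triples.

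First I would establish that $V$ is a lower set of $\{0, \ldots, n\}$ by applying the restriction axiom to $(0,i)\,\RR\,(1,i)$ with $z = (1,k)$ for $k < i$: the maximum of $(0,i)^{\downarrow} \cap (1,k)^{\downarrow}$ is $(0,k)$, forcing $k \in V$. An analogous restriction applied to $(1,i)\,\RR\,(1,i+1)$ with $z = (0,i+1)$ forces $i \in S_B$, giving $S_T \subseteq S_B$. Thus $V$ is one of the $n+2$ lower sets $\varnothing, \{0\}, \ldots, \{0, \ldots, n\}$.

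The central claim is that $S_T$ is entirely determined by $(S_B, V)$ via the formula $S_T = \{i \in S_B : i+1 \in V\}$. For the forward direction, given $i \in S_T$ the liftable hypothesis yields $i \in V$; the chain $(0,i)\,\RR\,(1,i)\,\RR\,(1,i+1)$ gives $(0,i)\,\RR\,(1,i+1)$ by transitivity, and 2-out-of-3 applied to $(0,i) \leq (0,i+1) \leq (1,i+1)$ (together with $i \in S_B$) forces $i+1 \in V$. For the reverse direction, given $i \in S_B$ with $i+1 \in V$ (so $i \in V$ by downward closure), the chain $(0,i)\,\RR\,(0,i+1)\,\RR\,(1,i+1)$ yields $(0,i)\,\RR\,(1,i+1)$; then 2-out-of-3 on $(0,i) \leq (1,i) \leq (1,i+1)$ forces $(1,i)\,\RR\,(1,i+1)$, i.e., $i \in S_T$.

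The main obstacle is verifying the converse, that any pair $(S_B, V)$ with $V$ a lower set actually assembles into a valid saturated liftable transfer system when $S_T$ is defined by the formula and we take closure. I would systematically check the restriction axiom on each type of generator (horizontal covers from $S_B$ and $S_T$, vertical covers from $V$) and 2-out-of-3 on every chain $x \leq y \leq z$; the key observation is that any mixed relation $(0,i)\,\RR\,(1,j)$ for $i < j$ necessarily decomposes as $(0,i)\,\RR\,(0,j)\,\RR\,(1,j)$, so it comes for free from $i, j$ lying in a common $S_B$-block with $j \in V$. Once the bijection is in hand, the count is immediate: $S_B$ has $2^n$ possibilities and $V$ has $n+2$, yielding $(n+2) 2^n$ saturated transfer systems for $D_{p^n}$.
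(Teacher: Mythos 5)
Your proposal is correct and follows essentially the same route as the paper: both parameterize a saturated liftable transfer system by a free choice of saturated bottom row ($2^n$ options) together with the down-closed set of verticals ($n+2$ options), and both use liftability, restriction, and 2-out-of-3 to show that the top row is forced (your formula $S_T=\{i\in S_B : i+1\in V\}$ is exactly the paper's ``identical to the bottom row to the left of $j$, empty to the right''). Like the paper, you leave the converse verification that every pair $(S_B,V)$ assembles into a valid saturated liftable transfer system as a sketched routine check, so the two arguments sit at the same level of rigor.
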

	
	\begin{proof}
	In~\cite{hmoo} it was proved that there are $2^n$ saturated transfer systems on $[n]$. Consider the bottom row of $[1]\times[n]$ given by the coordinates $(0,i)$, then we can freely pick any saturated transfer system to fill this in. Let $j$ be the maximal number such that $(0,j) \, \RR \, (1,j)$. Clearly there are $n+2$ such choices (including the possibility of no such $j$ existing). We then claim that this data uniquely determines a saturated transfer system on $[1] \times [n]$ which moreover satisfies $(L)$.
	
	We begin by observing that if $j',j'' > j$, we cannot have $(1 , j') \, \RR \, (1 , j'')$. If this were the case then by $(L)$ we would be forced to have $(0,j') \, \RR \, (1,j')$ which contradicts the maximality of $j$. Instead consider the existence of $(1,j) \, \RR \, (1 , j+1)$. By restriction we must have $(0,j) \, \RR \, (1, j+1)$ as well as $(0,j) \, \RR \, (0, j+1)$. As saturated transfer systems satisfy two-out-of-three, it would imply that we moreover have $(0,j+1) \, \RR \, (1,j+1)$, once again contradicting the maximality of $j$. As such, everything to the right of $j$ on the top row of $[1] \times [n]$ must be empty.
	
	As we have $(0,j) \, \RR \, (1,j)$, by restriction we also have $(0 , \ell) \, \RR \, (1 , \ell)$ for all $0 \leqslant \ell \leqslant j$. By the saturation condition, this forces the top row to the left of $j$ to be identical to the bottom row.
	
	Assembling everything, it follows that there are $(n+2)2^n$ possibilities as claimed. 
	\end{proof}

\begin{remark}
Saturated transfer systems have a simple description as those transfer systems which satisfy the 2-out-of-3 property, however they are extremely important in the realm of commutative equivariant homotopy theory. Indeed, they are related to the equivariant linear isometry operads. Every linear isometry operad arises from a saturated transfer system, but this relation need not be bijective.  We refer the reader to \cite{macbrough2023equivariant, Rubin2} for more details.
\end{remark}

We will use the strategy outlined in \autoref{sec:strategy} to give a recursion for $L(n)$. In particular, we begin by considering the partition of $L(n)$ as
\[
L(n) = \coprod_{(a,b) \in [1] \times [n]} L(n,(a,b))
\]
where $L(n,(a,b))$ is the set of liftable transfer systems such that $(a,b)$ is the minimal fibrant element (i.e., $(a,b)$ is minimal such that $(a,b)\, \RR\, (1,n)$).

We will find it advantageous to further split up the elements $L(n,(a,b))$ based on further partitioning properties of transfer systems.

\begin{defn}
Let $\RR$ be a transfer system on $[1] \times [n]$. We say that $(a,b) \in [1] \times [n]$ is:
\begin{itemize}
	\item \emph{Stationary} if $a=1$ and there exists no $d > b \geqslant c$ with $(1,c)\, \RR\, (1,d)$.
	\item \emph{Extendable} if $a=0$ and $(a,b)\, \RR\, (0,n)$.
\end{itemize}
\end{defn}

It is clear that any transfer system possesses at least one stationary and at least one extendable element, namely $(1,n)$ and $(0,n)$ respectively.  We now have a different partition of $L(n)$ as
\[
L(n) = \coprod_{1 \leqslant k \leqslant n+1} L(n,k)
\]
where $L(n,k)$ is the collection of transfer systems such that exactly $k$ elements are stationary. We then further refine each $L(n,k)$ as
\[
L(n,k) = \coprod_{1 \leqslant \ell \leqslant n+1} L(n,k,\ell)
\]
where $L(n,k,\ell) \subseteq L(n,k)$ is the subset of those transfer system where exactly $\ell$ elements are extendable.

Finally, we can define $L(n,k,\ell,(a,b))$ to be $L(n,k,\ell) \cap L(n,(a,b))$. We provide a full description of these transfer systems in the following definition.

\begin{defn}
Let $n \geqslant 0$, $1 \leqslant k, \ell \leqslant n+1, (a,b) \in [1] \times [n]$. We define $L(n,k,\ell,(a,b))$ to be the collection of liftable transfer systems $\RR$ on $[1] \times [n]$ such that:
	\begin{itemize}
	\item $\RR$ has $k$ stationary elements.
	\item $\RR$ has $\ell$ extendable elements.
	\item $(a,b)$ is the minimal fibrant element of $\RR$.
	\end{itemize} 
\end{defn}

In the next collection of propositions we will provide recursions for $L(n,k,\ell,(a,b))$ for varying families of $(a,b) \in [1] \times [n]$.   We require one more definition before continuing to the first case. 

\begin{defn}\label{defn:tam1}
We denote by $\mathrm{Tam}(n) \subset L(n)$ the set of all transfer systems $\RR$ with $(0,n)\, \RR\, (1,n)$ and $\mathrm{Tam}(n,k) =\mathrm{Tam}(n) \cap L(n,k)$.
\end{defn}

We will provide an in-depth exploration of $\mathrm{Tam}(n,k)$ in \autoref{sec:restrict}. In particular, we will explain their namesake (\emph{Tamari intervals}) and provide  explicit formul\ae{} for $|\mathrm{Tam}(n)|$ and $|\mathrm{Tam}(n,k)|$; see \autoref{prop:tamaricounts}.

\begin{prop}\label{prop:sum1}
Let $b >0$. Then
\[
L(n,k,\ell,(0,b)) =  \coprod_{0 \leqslant i \leqslant k} \mathrm{Tam}(b-1,i) \times L(n-b,k-i,\ell,(0,0)).
\]
\end{prop}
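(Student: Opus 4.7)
The plan is to apply the decomposition strategy of \autoref{sec:strategy} using the pivot $x=(0,b)$, which by hypothesis is the minimal fibrant element of $\RR$. Since $b>0$, we have $(0,b)_\uparrow = [1]\times\{b,\ldots,n\}\cong[1]\times[n-b]$ (with $(0,b)$ corresponding to $(0,0)$) and $(0,b)_\uparrow^{\mathsf{c}} = [1]\times\{0,\ldots,b-1\}\cong[1]\times[b-1]$. The theorem of \autoref{sec:strategy} exhibits $\RR$ uniquely as $\RR_1\amalg\RR_2$, with $\RR_1$ a transfer system on $[1]\times[b-1]$ and $\RR_2$ a transfer system on $[1]\times[n-b]$ with minimal fibrant element $(0,0)$. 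I propose that the desired bijection sends $\RR$ to $(\RR_1,\RR_2)$.

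To see that $\RR_1$ lands in $\mathrm{Tam}(b-1,i)$ for some $i$, I would apply the restriction axiom of \autoref{defn:cattran} to the relation $(0,b)\,\RR\,(1,n)$ with $z=(1,b-1)$: the set $(0,b)^{\downarrow}\cap(1,b-1)^{\downarrow}=\{(0,0),\ldots,(0,b-1)\}$ has unique maximum $(0,b-1)$, forcing the Tamari relation $(0,b-1)\,\RR_1\,(1,b-1)$. The numerical invariants split additively because no relation in $\RR$ crosses between the two pieces: a top-row element $(1,c)$ is stationary in $\RR$ exactly when it is stationary in its home piece, and every extendable element requires $(0,c)\,\RR\,(0,n)$ with $(0,n)$ in the second piece, so no element of the first piece is extendable. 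Writing $i$ for the stationary count of $\RR_1$, the second piece then contributes $k-i$ stationary elements and all $\ell$ extendable ones, so $\RR_2\in L(n-b,k-i,\ell,(0,0))$. Liftability of $\RR$ transfers to each piece since every top-row relation is internal to one piece.

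For the inverse direction, given $(\RR_1,\RR_2)\in\mathrm{Tam}(b-1,i)\times L(n-b,k-i,\ell,(0,0))$ I must verify that $\RR_1\amalg\RR_2$ forms a restriction-closed pair (hence defines a transfer system on $[1]\times[n]$), is liftable, and carries the correct invariants with minimal fibrant element $(0,b)$. Most of these assertions are symmetric to the forward calculations: liftability of each piece together with the absence of cross-piece relations yields liftability of $\RR$, the stationary and extendable counts add correctly, and any element fibrant in $\RR$ must $\RR$-relate to $(1,n)$ and hence lie in the second piece, so $(0,b)$ is the minimal fibrant element.

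The main obstacle is verifying the restriction-closed pair condition. A case analysis of the restriction axiom applied to relations $y\,\RR_2\,z$ with $z'\leq z$ and $z'\in(0,b)_\uparrow^{\mathsf{c}}$ shows that only relations of the form $(0,\beta)\,\RR_2\,(1,\beta')$ yield non-trivial forced relations on $\RR_1$, and that these all collapse to the family $\{(0,c)\,\RR_1\,(1,c)\}_{c<b}$. The Tamari relation $(0,b-1)\,\RR_1\,(1,b-1)$ combined with restriction inside $\RR_1$ produces this entire family, so the pair is restriction-closed exactly when $\RR_1$ satisfies the Tamari condition, completing the bijection.
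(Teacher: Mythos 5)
Your proposal is correct and takes essentially the same approach as the paper: decompose at the pivot $(0,b)$, extract the Tamari condition on $\RR_1$ from the restriction axiom, note that stationary counts add while all extendable elements live in the upper piece, and verify restriction-closure by observing that the only nontrivially forced relations are the verticals $(0,c)\,\RR\,(1,c)$ for $c<b$, which the Tamari relation supplies. The only (harmless) cosmetic difference is that you obtain $(0,b-1)\,\RR\,(1,b-1)$ in a single application of the restriction axiom and isolate the cross-piece case, whereas the paper runs the full case analysis.
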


\begin{proof}
Let $\RR \in L(n, (0,b))$. Then by restriction to $(0,b)_{\uparrow}^{\mathrm{c}} \cong [1] \times [b-1]$, we obtain a transfer system $\RR_1 \in L(b-1)$, and by restricting to $(0,b)_\uparrow \cong [1] \times [n-b]$, we obtain a transfer system $\RR_2 \in L(n-b)$.  

Since $(0,b)$ is fibrant in $\RR$ and the minimal element of $(0,b)_\uparrow$, we see that $\RR_2 \in L(n-b,(0,0))$. Similarly, since $(0,b)\, \RR\, (1,b)$ by assumption, we also have $(0,b-1)\, \RR\, (1,b-1)$. As such, $(1,b-1)$ is fibrant in $\RR_1$. In particular, we conclude that
\[
\RR_1 \in \mathrm{Tam}(b-1)
\]
and
\[
\RR_2 \in L(n-b,(0,0)).
\]
Conversely, given an arbitrary $\RR_1 \in \mathrm{Tam}(b-1)$ and $\RR_2 \in L(n-b,(0,0))$, we can construct a relation $\RR$ on $[1] \times [n]$ by placing $\RR_2$ to the right of $\RR_1$ (i.e., we reindex $\RR_2$ via a horizontal shift of $b$). We will show that  $\RR_1$ and $\RR_2$ form a restriction closed pair.

Clearly such an $\RR$ is transitive and satisfies the lifting condition. Suppose that $(c,d)\, \RR\, (e,f)$ and $(g,h) \leqslant (e,f)$. If $(e,f) \leqslant (1,b-1)$, then we know that $(c,d)\wedge(g,h)\, \RR\, (g,h)$ by the virtue of $\RR_1$ being a transfer system. 

Else, if $(e,f) \not\leqslant (1,b-1)$, then we must have $(e,f) \geqslant (0,b)$, and since $(c,d)\, \RR\, (e,f)$ and we have no relations from $(0,b)_{\uparrow}^{\mathsf{c}}$ to $(0,b)_\uparrow$, we must also have $(c,d) \geqslant (0,b)$. If $(g,h) \geqslant (0,b)$ as well, then $(c,d) \wedge (g,h)\, \RR\, (g,h)$ from that fact that $\RR_2$ is a transfer system.

The last remaining case is $(0,b) \leqslant (c,d)$ and $(g,h) \leqslant (1,b-1)$, then $(c,d) \wedge (g,h) = (c \wedge g, h)$. If $c \wedge g = g$, then we have $(c \wedge g ,h ) \RR(g,h)$. Otherwise, $c=0$ and $g=1$, so that we just need to ensure that $(0,h)\, \RR\, (1,h)$ fo all $h \leqslant b$, but this follows from that fact that $(0,b-1)$ is fibrant in $\RR_1$.

As such, we have shown that
\[
L(n,(0,b)) = \mathrm{Tam}(b-1) \times L(n-b,(0,0)).
\]

Now we note that $(1,d)$ is stationary in $\RR$ if and only if $d \leqslant b-1$, and $(1,d)$ is stationary in $\RR_1$, or $d \geqslant b$ and $(1,d-b)$ is stationary in $\RR_2$. Furthermore, $(0,d)$ is extendable in $\RR$ if and only if $d \geqslant b$ and $(0,d-b)$ is extendable in $\RR_2$. As such, we can refine the above formula to the desired result.
\end{proof}

\autoref{prop:sum1} is valid whenever $b > 0$. However, we see that the formula is trivial if $b=0$, even though there are liftable transfer systems on $[1] \times [n]$ where the minimal fibrant element is $(0,0)$ (in particular the maximal transfer system is such). The next proposition resolves that case of $b=0$.

\begin{prop}\label{prop:sum2}
\[
L(n,k,\ell,(0,0)) = \coprod_{k-1 \leqslant k' \leqslant n} L(n-1, k', \ell - 1).
\]
\end{prop}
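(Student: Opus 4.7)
The plan is to exhibit the bijection by extracting from each $\RR\in L(n,k,\ell,(0,0))$ two pieces of data: a restriction $\RR'$ to the sublattice $\{(a,b):b\geqslant 1\}\cong[1]\times[n-1]$, together with a natural number $d^*$ recording how far $(1,0)$ reaches along the top row. Because $(0,0)$ is the minimum of $[1]\times[n]$ and the minimal fibrant element, the relations $(0,0)\,\RR\,(a,b)$ hold automatically for every $(a,b)$ (by restricting $(0,0)\,\RR\,(1,n)$), so these carry no information; the pair $(\RR',d^*)$ alone encodes $\RR$.

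First I would define $\RR'$ on $[1]\times[n-1]$ by $(a,b)\,\RR'\,(c,d)\iff(a,b+1)\,\RR\,(c,d+1)$ and verify it is a liftable transfer system. This is routine: meets of elements with second coordinate $\geqslant 1$ retain that property, so the restriction axiom passes to $\RR'$, and liftability transfers directly. Because $(0,0)$ is always extendable in $\RR$, while every other extendable $(0,b)$ with $b\geqslant 1$ corresponds to an extendable element of $\RR'$, I conclude that the extendable count of $\RR'$ is $\ell-1$.

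Next I would introduce $d^*:=\max\{d:(1,0)\,\RR\,(1,d)\}$. The restriction axiom forces $\{d:(1,0)\,\RR\,(1,d)\}=\{0,1,\ldots,d^*\}$, and a short transitivity argument shows $(1,d^*)$ is stationary in $\RR$: otherwise some $(1,c)\,\RR\,(1,d)$ with $c\leqslant d^*<d$ would combine with $(1,0)\,\RR\,(1,c)$ to violate maximality of $d^*$. Hence when $d^*\geqslant 1$, the corresponding element is also stationary in $\RR'$.

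The combinatorial heart of the argument will be the stationary count. For $c\geqslant 1$, $(1,c)$ is stationary in $\RR$ iff its preimage under the shift is stationary in $\RR'$ and $c\geqslant d^*$; while $(1,0)$ is stationary in $\RR$ iff $d^*=0$. Listing the stationary indices of $\RR'$ in its own indexing as $s_1<\cdots<s_{k'}$, these rules give $k=k'+1$ when $d^*=0$ and $k=k'-i+1$ when $d^*=s_i+1$. Consequently, for each $\RR'\in L(n-1,k',\ell-1)$ and fixed target $k$, a compatible $d^*$ exists iff $k'\geqslant k-1$ and is then unique, yielding the claimed bijection. The main obstacle will be verifying that the inverse construction — declaring $(0,0)$ to relate to everything, shifting $\RR'$ back, and adding $(1,0)\,\RR\,(1,d)$ for $d\leqslant d^*$ — yields a genuine liftable transfer system without forcing any unintended relations, which relies on the stationarity of $(1,d^*-1)$ in $\RR'$ to control transitivity along the top row.
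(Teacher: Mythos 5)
Your proposal is correct and follows essentially the same route as the paper: restrict to the shifted copy $[1]\times[n-1]$, record the maximal $d^*$ with $(1,0)\,\RR\,(1,d^*)$ subject to the stationarity constraint on $(1,d^*-1)$, and track how the stationary and extendable counts shift under this correspondence. Your accounting of which $k'$ admit a compatible $d^*$ is in fact spelled out more explicitly than in the paper's own proof, and the verification you flag as the remaining obstacle is likewise left implicit there.
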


\begin{proof}
Let $\RR \in L(n,k,(0,0))$, and $\RR' \in L(n-1)$ be the restriction to $(0,1)_\uparrow \cong [1] \times [n-1]$. Let $b \leqslant n$ be the maximal such that $(1,0)\, \RR\, (1,b)$. Then $\RR$ is determined by $\RR'$ and $b$. Further, the only constraint that we have on $\RR'$ and $b$ is that if $b \neq 0$ then $(1,b-1)$ must be stationary in $\RR'$.

We then note that an element $(1,d)$ is stationary in $\RR$ if and only if $d \geqslant b$, and either $d=b=0$ or $(1,d-1)$ is stationary in $\RR'$. Conversely, an element $(0,d)$ is extendable if and only if $d=0$ or $(0,d-1)$ is extendable in $\RR'$. The result follows.
\end{proof}

Via \autoref{prop:sum1} and \autoref{prop:sum2}, we have now computed $|L(n,k,\ell,(0,b))|$ for all possible values of $b$. As such we now turn our attention to computing $|L(n,k,\ell,(1,b))|$. The case where $b < n$ is trivial as we now prove.

\begin{prop}\label{prop:sum3}
Let $b < n$, then
\[
L(n,k,\ell,(1,b)) = \varnothing.
\]
\end{prop}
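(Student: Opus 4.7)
The plan is to derive a direct contradiction from the assumption that $L(n,k,\ell,(1,b))$ is nonempty for some $b<n$, using the liftable condition $(L)$ together with transitivity of a transfer system.

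Suppose for contradiction that $\RR \in L(n,k,\ell,(1,b))$ with $b<n$. Since $(1,b)$ is the minimal fibrant element, we have $(1,b)\,\RR\,(1,n)$ by definition of fibrancy. The hypothesis $b<n$ is crucial: it means that the relation $(1,b)\,\RR\,(1,n)$ is a top-row relation with strictly smaller first index, so the liftable axiom $(L)$ of \autoref{defn:liftable} applies and yields $(0,b)\,\RR\,(1,b)$.

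Now I would chain this with the fibrancy of $(1,b)$: by transitivity of $\RR$, $(0,b)\,\RR\,(1,b)\,\RR\,(1,n)$, so $(0,b)$ is itself fibrant. But $(0,b) < (1,b)$ in the lattice $[1]\times[n]$, which contradicts the assumption that $(1,b)$ is the \emph{minimal} fibrant element. Hence no such $\RR$ exists and $L(n,k,\ell,(1,b)) = \varnothing$.

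The argument is essentially a one-line consequence of $(L)$, so there is no real obstacle — the only thing to check is that the hypothesis $b<n$ is exactly what makes the liftable condition applicable (if $b=n$, then $(1,b)\,\RR\,(1,n)$ is reflexive and $(L)$ gives no information, which is consistent with the fact that $(1,n)$ can certainly be a minimal fibrant element, namely in the empty transfer system).
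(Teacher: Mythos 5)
Your proof is correct and is essentially identical to the paper's: both apply the liftable condition $(L)$ to the relation $(1,b)\,\RR\,(1,n)$ (valid precisely because $b<n$) to obtain $(0,b)\,\RR\,(1,b)\,\RR\,(1,n)$, contradicting the minimality of the fibrant element $(1,b)$. Your added remark about why the case $b=n$ escapes the argument is a nice sanity check but not a departure from the paper's route.
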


\begin{proof}
Recall that $L(n)$ consists of the liftable transfer systems. Assume that $\RR \in L(n,k,\ell,(1,b))$ then by assumption we have $(1,b)\, \RR\, (1,n)$. However, by the lifting condition, this implies that $(0,b)\, \RR\, (1,b)\, \RR\, (1,n)$, violating the minimality of $(1,b)$. Therefore no such $\RR$ exists.
\end{proof}

The simplicity of the case of $(1,b)$ where $b < n$ is one of the reasons why working with liftable transfer systems is simpler than the general case. Given \autoref{prop:sum3}, it now suffices to compute $L(n,k,\ell,(1,n))$:

\begin{prop}
We have
\[
L(n,k,\ell,(1,n))  = \coprod_{\ell - 1 \leqslant \ell' \leqslant n} L(n-1,k-1, \ell').
\]
\end{prop}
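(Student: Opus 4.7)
The plan is to construct a bijection by restricting $\RR$ to the sublattice $[1]\times[n-1]$ obtained by removing the rightmost column $\{(0,n),(1,n)\}$. Given $\RR\in L(n,k,\ell,(1,n))$, I would let $\RR''$ denote the restriction of $\RR$ to $(0,n)_\uparrow^{\mathsf c} \cong [1]\times[n-1]$, and record separately the set $E := \{i \leqslant n-1 : (0,i)\,\RR\,(0,n)\}$ of non-top bottom-row elements related to $(0,n)$. Since $(1,n)$ is the unique fibrant of $\RR$, no relation of $\RR$ lands at $(1,n)$ except the trivial one, so the pair $(\RR'',E)$ determines $\RR$ completely.

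Next I would analyse which pairs $(\RR'',E)$ arise. Erasing the always-stationary element $(1,n)$ removes exactly one stationary element, and no $\RR$-top-row relation could have targeted $(1,n)$ (else $(1,n)$ would not be the unique fibrant), so $\RR''$ is a liftable transfer system with stationary count $k-1$. The central observation is that the extendable elements of $\RR''$ form a chain under its bottom-row relation: if $i < j$ are both extendable, then from $(0,i)\,\RR''\,(0,n-1)$ the restriction axiom applied along $(0,j) \leqslant (0,n-1)$ yields $(0,i) = (0,i)\wedge(0,j)\,\RR''\,(0,j)$. Transitivity in $\RR$ then forces $E$ to be an initial segment of the totally-ordered set $\mathrm{Ext}(\RR'') = \{e_1 < \cdots < e_{\ell'}\}$. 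Since $(0,n)$ itself is automatically extendable in $\RR$, we obtain $\ell = |E|+1$, and hence $\ell' \geqslant \ell - 1$.

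For the inverse, given $\RR''\in L(n-1,k-1,\ell')$ with $\ell' \geqslant \ell-1$, I would adjoin to $\RR''$ the relations $(0,e_i)\,\RR\,(0,n)$ for $i = 1,\ldots,\ell-1$, and verify that the result is a liftable transfer system with minimal fibrant $(1,n)$, stationary count $k$, and extendable count $\ell$. The main obstacle is checking the restriction axiom for each newly adjoined relation $(0,e_i)\,\RR\,(0,n)$ against every $(0,k) \leqslant (0,n)$; the only nontrivial case is $e_i \leqslant k \leqslant n-1$, which reduces to $(0,e_i)\,\RR''\,(0,k)$, and this follows by restricting $(0,e_i)\,\RR''\,(0,n-1)$ along $(0,k) \leqslant (0,n-1)$. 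Transitivity with preexisting bottom-row relations is handled by the chain structure of $\mathrm{Ext}(\RR'')$; the liftable condition and the property that $(1,n)$ is the unique fibrant are inherited directly from $\RR''$ and the construction.
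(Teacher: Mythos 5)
Your proof is correct and takes essentially the same route as the paper: both restrict $\RR$ to $[1]\times[n-1]$ and observe that the only additional data is the set of bottom-row elements related to $(0,n)$, which must be an initial segment of the (totally ordered) set of extendable elements of the restriction --- the paper records this segment by its maximal element $b$, while you record the set $E$ itself, but the bijection and the bookkeeping of stationary and extendable counts are identical.
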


\begin{proof}
Let $\RR \in L(n,k,(1,n))$, and $\RR' \in L(n,-1)$ be the restriction to $[1] \times [n-1]$. Clearly there are no constraints on $\RR'$ except from that it must have $k-1$ stationary elements.

We begin by assuming that there are no relations $(0,d)\, \RR\, (0,n)$ for $d < n$, that is, $\RR$ has a single extendable element. Thus all relations in $\RR$ are contained in $(0,n)^\mathsf{c}_{\uparrow}$ and hence $\RR$ is determined solely by $\RR'$. In particular we have
\[
L(n,k,1,(1,n)) = L(n-1,k-1) = \coprod_{0 \leqslant \ell' \leqslant n} L(n-1,k-1,\ell').
\] 
We now move to the case where $\RR$ has more than a single extendable element. That is, we assume that there exists some $d < n$ with $(0,d)\, \RR\, (0,n)$ for $d < n$. The set of all $d < n$ admitting arrows $(0,d)\, \RR\, (0,n)$ is determined by the maximal such $d$, which we denote $b$. Indeed, if $(0,d)\, \RR\, (0,n)$ then by restriction we also have $(0,d)\, \RR\, (0,b)$, and hence by definition $(0,d) \RR' (0,b)$. Conversely if $(0,d) \RR' (0,b)$, then by transitivity we have $(0,d)\, \RR\, (0,b)\, \RR\, (0,n)$. The only constraint that we have on $b$ is that $(0,b)$ must be extendable in $\RR'$. Note that if $b$ is the $i$-th smallest extendable element in $\RR'$, then $\RR$ has $i+1$ extendable elements. If $\RR'$ has $k$ stationary elements, then $\RR$ has $k+1$ stationary elements. Thus we have shown that
\[
L(n,k,\ell,(1,n)) = \coprod_{\ell - 1 \leqslant \ell' \leqslant n} L(n-1,k-1,\ell').
\] 
as required. We note that this retrieves the above formula when $\ell = 1$.
\end{proof}

We are now in a position to combine the above results into the main theorem of this section which provides an explicit recursive algorithm for computing $|L(n)|$.

\begin{theorem}\label{thm:liftable}
Let $n \geqslant 0$. Then for $1 \leqslant \ell, k \leqslant n+1$ and $(a,b) \in [1] \times [n]$ we have:
	\begin{align*}
	|L(n,k,\ell,(0,0))| = & \sum_{k-1 \leqslant k' \leqslant n} |L(n-1,k',\ell-1)|&\\[10pt]
	|L(n,k,\ell,(1,n))| = & \sum_{\ell-1 \leqslant \ell' \leqslant n} |L(n-1,k-1,\ell')|&\\[10pt]
	|L(n,k,\ell,(0,b))| = & \sum_{0 \leqslant i \leqslant k} |\mathrm{Tam}(b-1,i)| \cdot |L(n-b,k-i,\ell,(0,0))|& (b>0)\\[10pt]
	|L(n,k,\ell,(1,b))| = & {\hspace{1ex}} 0 & (b<n).
	\end{align*}
By convention, set $|L(n,k, \ell,(a,b))| = 0$ when $k, \ell$ are out of range. Then
\[
|L(n)| = \sum_{\substack{1 \leqslant k, \ell \leqslant n+1 \\[4pt] (a,b) \in [1] \times [n]}} |L(n,k,\ell,(a,b))|.
\]
\end{theorem}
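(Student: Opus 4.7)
The plan is to assemble the preceding propositions of this section into a unified statement. Each of the four recursive formulas follows by taking cardinalities in one of the already-established set-theoretic bijections, and the final summation follows from a partition argument based on the lemma that every transfer system admits a unique minimal fibrant element.

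First, I would derive each of the four case-by-case formulas. The identity for $|L(n,k,\ell,(0,0))|$ comes from taking cardinalities on both sides of \autoref{prop:sum2}. The identity for $|L(n,k,\ell,(1,n))|$ comes similarly from the unnumbered proposition immediately preceding the theorem. The formula for $|L(n,k,\ell,(0,b))|$ with $b>0$ follows from \autoref{prop:sum1} after applying $|A\times B|=|A|\cdot|B|$ to each summand of the disjoint union and noting that the index set $\{0,\ldots,k\}$ is finite so the cardinality of the disjoint union is the sum of cardinalities. Finally, the formula for $|L(n,k,\ell,(1,b))|$ with $b<n$ is immediate from \autoref{prop:sum3}, which asserts that this set is empty.

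Next, I would justify the final summation. By the unique minimal fibrant element lemma of \autoref{sec:strategy}, every $\RR\in L(n)$ is counted by exactly one $L(n,(a,b))$, giving $L(n) = \coprod_{(a,b)\in [1]\times[n]} L(n,(a,b))$. Refining further by the number of stationary elements $k$ and the number of extendable elements $\ell$ as described in the preparatory definitions, and observing that both counts lie in $\{1,\ldots,n+1\}$ (since $(1,n)$ is always stationary and $(0,n)$ is always extendable, while no more than $n+1$ elements in either row can carry these properties), we obtain the double refinement $L(n) = \coprod_{k,\ell,(a,b)} L(n,k,\ell,(a,b))$. Taking cardinalities yields the claimed sum.

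There is no real mathematical obstacle here; the theorem is a clean packaging of the section's intermediate results. The only point demanding care is the convention that $|L(n,k,\ell,(a,b))|=0$ when $k$ or $\ell$ falls outside of $[1,n+1]$. I would verify that this convention makes each right-hand sum well-defined, that it is consistent with the base cases (most notably $n=0$, where $L(0)$ consists of a single transfer system counted by $|L(0,1,1,(0,0))|=1$), and that the recursion strictly decreases $n$ on the right-hand side so that the algorithm terminates.
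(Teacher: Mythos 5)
Your proposal is correct and coincides with the paper's own (implicit) argument: the theorem is stated without a separate proof precisely because it is the assembly of \autoref{prop:sum1}, \autoref{prop:sum2}, \autoref{prop:sum3}, and the unnumbered proposition for the case $(1,n)$, combined with the partition of $L(n)$ by minimal fibrant element refined by the stationary and extendable counts. One small correction to your base-case aside: $|L(0)| = 2$, not $1$, since $[1]\times[0]\cong[1]$ carries both the trivial transfer system (whose minimal fibrant element is $(1,0)=(1,n)$, so it lies in $L(0,1,1,(1,0))$) and the complete one (minimal fibrant element $(0,0)$), in agreement with \autoref{tab:tncomputation}.
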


\begin{corollary}
Let $n \geqslant 0$. Then the number of homotopically distinct $N_\infty$ operads for $D_{p^n}$ is given by the recursion in \autoref{thm:liftable}.
\end{corollary}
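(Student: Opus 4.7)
The proof is essentially a triple citation, so my plan is to stitch three existing ingredients together and check that they compose. The first ingredient is the Blumberg--Hill correspondence, sharpened by the subsequent work of Bonventre--Pereira, Gutiérrez--White, and Rubin: homotopy classes of $N_\infty$ operads for a finite group $G$ are in natural bijection with $G$-transfer systems on $\mathrm{Sub}(G)$. The second ingredient is \autoref{prop:whatarelift}, quoted from \cite{bmometa}: via the lattice isomorphism $\mathrm{Sub}(D_{p^n})/D_{p^n} \cong [1]\times[n]$, the set of transfer systems for $D_{p^n}$ is identified with $L(n)$, the collection of liftable transfer systems on $[1]\times[n]$. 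The third ingredient is \autoref{thm:liftable} itself, which we have just proved and which computes $|L(n)|$ recursively.

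The plan, then, is simply to write the chain of equalities
\[
\#\{N_\infty\text{-operads for }D_{p^n}\}/{\simeq}
\;=\;
|\mathsf{Tr}_{D_{p^n}}|
\;=\;
|L(n)|,
\]
where the first equality is the Blumberg--Hill/Rubin correspondence and the second is \autoref{prop:whatarelift}, and then to observe that the right-hand side is precisely what \autoref{thm:liftable} computes. No new combinatorics is needed.

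There is essentially no obstacle; the one point worth double-checking is that the word ``transfer system'' in \autoref{prop:whatarelift} really is the same one used in the enumerative result, i.e.~that it enumerates the same set as ``homotopy classes of $N_\infty$ operads.'' Since \autoref{prop:whatarelift} is stated in exactly that form and cites the appropriate results from \cite{bmometa}, the verification amounts only to tracing the identification $\mathrm{Sub}(D_{p^n})/D_{p^n} \cong [1]\times[n]$ (sending the conjugacy class of the cyclic subgroup $C_{p^i}$ to $(0,i)$ and of the reflection classes $\langle r^{p^i}, s\rangle$ to $(1,i)$, or any equivalent convention). Once this identification is recorded, the corollary follows at once.
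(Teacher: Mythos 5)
Your proposal is correct and matches the paper's (implicit) argument: the corollary is stated without proof precisely because it follows immediately by combining \autoref{prop:whatarelift} (the bijection between transfer systems for $D_{p^n}$ and liftable transfer systems on $[1]\times[n]$, via $\Sub(D_{p^n})/D_{p^n}\cong[1]\times[n]$) with the Blumberg--Hill/Rubin correspondence between homotopy classes of $N_\infty$ operads and transfer systems, and then invoking \autoref{thm:liftable} to count $|L(n)|$. Nothing further is needed.
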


\begin{remark}\label{rem:onlylifthere}
Note that in \autoref{thm:liftable}, it is only the computation $|L(n,k,\ell,(1,b))| = 0$ for $b < n$ where we use the fact that we are working with liftable transfer systems. This will be relevant in the next section we consider transfer systems for $C_{qp^n}$.
\end{remark}

\begin{computation}\label{comp:tn}
By starting at $m=0$, we inductively store the values of $|L (m, k, \ell, (a,b))|$ for all $m \leqslant n$, $1 \leqslant k, \ell \leqslant m+1$ and $(a,b) \in [1] \times [m]$. The entries for all $m < m'$ allow us to compute the values for $m=m'$ via \autoref{thm:liftable} and \autoref{prop:tamaricounts}; some small values are recorded in \autoref{tab:tncomputation}, and large scale structure appears at the end of the paper in \autoref{fig:values}.
\begin{table}[h]
\begin{tabular}{|c|c|}
\hline
\rowcolor[HTML]{C0C0C0} 
\textbf{$n$} & \textbf{$|L(n)|$} \\ \hline
0            & 2                 \\ \hline
1            & 9                 \\ \hline
2            & 56                \\ \hline
3            & 416               \\ \hline
4            & 3457              \\ \hline
5            & 31063             \\ \hline
6            & 295834            \\ \hline
7            & 2948082           \\ \hline
8            & 30471080          \\ \hline
9            & 324580196         \\ \hline
10           & 3546142551        \\ \hline
\end{tabular}
\vspace{3mm}
\caption{Values of $|L(n)|$, the number of transfer systems for $D_{p^n}$ ($p\ne 2$), computed via \autoref{thm:liftable}.}\label{tab:tncomputation}
\end{table}
\end{computation}

\begin{remark}
We leave open the challenge of finding a closed formula for $|L(n)|$. One might begin with a five-variable generating function and the functional equation implied by \autoref{thm:liftable}. Of course, a bijective enumeration would be even more desirable.
\end{remark}
\begin{remark}
In \autoref{prop:max_asymp} we deduce a nontrivial lower bound on the asymptotics of $|L(n)|$.
\end{remark}

\section{Enumerating transfer systems for $C_{qp^n}$}\label{sec:cyclic}

In the previous section we enumerated the liftable transfer systems on $[1] \times [n]$.  We will now move to the general case. For clarity we will write $T(n)$ for the collection of all transfer systems $[1] \times [n]$ (i.e., $L(n) \subseteq T(n)$). We will employ the same strategy as before, and $T(n,k, \ell, (a,b))$ will be as before (but without the lifting condition).

From \autoref{rem:onlylifthere} we see that we need only consider how to resolve the case of $T(n,k,\ell,(1,b))$ for $b < n$, and in the other cases the recursion for $T(n,k, \ell, (a,b))$ is the same as the recursion for $L(n,k, \ell, (a,b))$. To resolve this remaining case, we will use a powerful observation regarding the duality of transfer systems as described in~\cite{fooqw} which we now recall. For a transfer system $\RR$ on a finite lattice $L$, we write
\[
  \mathcal{E}(\RR) = \{ (z,y) \mid \text{ there exists } x \in L \text{ such that } z \leqslant  x < y \text{ and } x\, \RR\, y  \}
\]
for the \emph{downward closure of $\RR$}.

We recall that a lattice $L$ admits a \emph{self duality} if there exists a bijection $\nabla \colon L \to L$ such that $ x \leqslant y$ if and only if $y^\nabla \leqslant x^\nabla $ for all $x,y \in L$.

\begin{defn}
Let $L$ be a finite lattice admitting a self duality $\nabla$ and let $\RR$ be a transfer system on $L$. The \emph{dual} of $\RR$ is the transfer system $\RR^\ast$ defined as
\[
\RR^\ast = ((\mathcal{E}(\RR)^{op})^\nabla)^{\mathsf{c}}.
\]
\end{defn}

From~\cite[Theorem 4.21]{fooqw} we have that $(\RR^\ast)^\ast = \RR$ so this provides an involution on the set $\mathsf{Tr}(L)$. In the case that $L = [1] \times [n]$, we use the canonical duality given by $(a,b) \xmapsto{\nabla} (1-a,n-b)$. In particular we have $(a,b) \, \RR \, (c,d)$ if and only if $(1-c,n-d) \, \cancel{\mathcal{E}(\RR)}  \, (1-a,n-b)$.

\begin{prop}\label{prop:oddoneout}
The duality on $\mathsf{Tr}([1] \times [n])$ restricts to a duality 
\[
T(n,k,\ell,(a,b)) \longleftrightarrow T(n,\ell,k,(1-a,n-b)).
\]
This duality does not preserve the property of being liftable.
\end{prop}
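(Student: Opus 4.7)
The plan is to prove the bijective statement by unwinding how the involution $\RR \mapsto \RR^\ast$ interacts with (a) the minimal fibrant element, and (b) the counts of stationary and extendable elements; then to exhibit a small explicit counterexample for the failure of liftability. Since $(\RR^\ast)^\ast = \RR$, once we show that $\RR \in T(n,k,\ell,(a,b))$ forces $\RR^\ast \in T(n,\ell,k,(1-a,n-b))$, the reverse inclusion is automatic and the map is bijective.

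First I would unpack the definition of $\RR^\ast$ to see that $(a,b)\,\RR^\ast\,(1,n)$ if and only if $(1-a,n-b)$ has no non-trivial $\RR$-predecessor, i.e.\ $(1-a,n-b)$ is a \emph{source} of $\RR$. Thus the set of fibrant elements of $\RR^\ast$ is $\nabla(S)$, where $S$ denotes the set of $\RR$-sources, and its meet equals $\nabla(\bigvee S)$. The key substep is then the identity $\bigvee S = F$, where $F$ is the minimal fibrant element of $\RR$. On the one hand, $F$ is itself a source: if some $y < F$ had $y\,\RR\,F$, then by transitivity $y\,\RR\,(1,n)$, contradicting the minimality of $F$. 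On the other hand, for any source $s$, the transfer-system restriction axiom applied to $F\,\RR\,(1,n)$ with $s \leqslant (1,n)$ yields $F \wedge s \,\RR\, s$, and since $s$ is a source this forces $F \wedge s = s$, i.e.\ $s \leqslant F$. Combining, $\bigvee S = F$ and hence the minimal fibrant element of $\RR^\ast$ is $\nabla(F) = (1-a, n-b)$.

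Next I would directly compute the restriction of $\RR^\ast$ to the rows. The bottom-row computation shows $(0,b)\,\RR^\ast\,(0,n)$ if and only if there is no $j < n-b$ with $(1,j)\,\RR\,(1,n-b)$, i.e.\ $(1,n-b)$ is a source for the top-row restriction of $\RR$. Hence the extendable elements of $\RR^\ast$ are in bijection (via $b \leftrightarrow n-b$) with the top-row sources of $\RR$. Since the top-row restriction is a transfer system on the chain $[n]$, each of its connected components in the undirected graph has a unique minimum (source) and a unique maximum (stationary element), so the number of sources equals the number of stationary elements. Therefore $|\text{extendable}(\RR^\ast)| = |\text{stationary}(\RR)|$, and applying the same reasoning to $\RR^\ast$ together with $(\RR^\ast)^\ast = \RR$ gives the reverse equality, completing the bijection.

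For the non-preservation of liftability, take $L = [1]\times[1]$ and let $\RR$ be the transfer system generated by the single non-trivial relation $(0,1)\,\RR\,(1,1)$; the restriction axiom forces $(0,0)\,\RR\,(1,0)$. This $\RR$ has no non-trivial top-row relation, so it vacuously satisfies $(L)$. A direct unwinding of the definition shows that the non-trivial relations of $\RR^\ast$ are exactly $(0,0)\,\RR^\ast\,(0,1)$ and $(1,0)\,\RR^\ast\,(1,1)$; the latter is a non-trivial top-row relation, yet the pair $((0,0),(1,0))$ is not in $\RR^\ast$, so $(L)$ fails for $\RR^\ast$. The main bookkeeping obstacle will be the middle step: carefully analyzing the interval $[n-d, n-c)$ relative to the predecessor-set of $(0,n-c)$ in the bottom row requires splitting into the source and non-source cases, but once this is done the source/stationary bijection within a chain transfer system follows formally.
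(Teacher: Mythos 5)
Your proof is correct and follows essentially the same route as the paper: unwind the formula for $\RR^\ast$ to identify its fibrant elements with $\nabla$ of the $\RR$-sources (whose join you correctly show is the minimal fibrant element) and its extendable elements with the top-row sources of $\RR$, which are equinumerous with the stationary elements. Your only real additions are the slightly more careful component-counting argument on the chain in place of the paper's direct index shift $y \leftrightarrow y-1$, and the explicit $[1]\times[1]$ counterexample for non-liftability, which the paper asserts without proof and which you verify correctly.
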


\begin{proof}
Let $\RR \in T(n,k,\ell,(a,b))$. We first observe that $(1-a,n-b)$ is the minimal fibrant element in $\RR^\ast$. Indeed, we have $(a,b)$ is the minimal element such that $(a,b)\, \RR\, (1,n)$. From the definition, we see that $(1-a,n-b)\, \RR^\ast\, (1,n)$ if and only if $(0,0)\, \cancel{\mathcal{E}(\RR)}\, (a,b)$. Assume that we have $(0,0)\, \mathcal{E}(\RR) \, (a,b)$, then this implies the existence of some $(i,j) < (a,b)$ such that $(i,j)\, \RR\, (a,b)$, but by transitivity we would get $(i,j)\, \RR\, (a,b)\, \RR\, (1,n)$, contradicting the minimality of $(a,b)$. The minimality of $(1-a,n-b)$ as a fibrant element of $\RR^\ast$ is afforded by the minimality of $(a,b)$ in $\RR$.

We will now explore the duality between the extendable and stationary elements. Let $y \geqslant 1$, we will show that $(1,y-1)$ is stationary in $\RR$ if and only if $(0,n-y)$ is extendable in $\RR^\ast$. Again by definition, this happens if and only if $(1,0)\, \cancel{\mathcal{E}(\RR)}\, (1,y)$.  Assume that $(1,0)\, \mathcal{E}(\RR)\, (1,y)$, then this implies the existence of some $0 \leqslant z < y$ with $(1,z)\, \RR\, (1,y)$. However, we have assumed that $y-1$ is stationary, so no such $z$ can exist.

The remaining case of $y=0$ is covered by the fact that $(1,n)$ is always stationary and $(0,n)$ is always extendable.
\end{proof}

Combining \autoref{prop:oddoneout}, \autoref{thm:liftable} and \autoref{rem:onlylifthere} we arrive at our desired result.

\begin{theorem}\label{thm:nonliftable}
Let $n \geqslant 0$. Then for $1 \leqslant \ell, k \leqslant n+1$ and $(a,b) \in [1] \times [n]$ we have:
	\begin{align*}
	|T(n,k,\ell,(0,0))| = & \sum_{k-1 \leqslant k' \leqslant n} |T(n-1,k',\ell-1)|&\\[10pt]
	|T(n,k,\ell,(0,b))| = & \sum_{0 \leqslant i \leqslant k} |\mathrm{Tam}(b-1,i)| \cdot |T(n-b,k-i,\ell,(0,0))|& (b>0)\\[10pt]
	|T(n,k,\ell,(1,b))| = & {\hspace{1ex}} |T(n,\ell,k,(0,n-b))|. &
	\end{align*}	
By convention, set $|T(n,k, \ell,(a,b))| = 0$ when $k, \ell$ are out of range. Then
\[
|T(n)| = \sum_{\substack{1 \leqslant k, \ell \leqslant n+1 \\[4pt] (a,b) \in [1] \times [n]}} |T(n,k,\ell,(a,b))|.
\]
\end{theorem}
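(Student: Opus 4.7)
The plan is to reuse, essentially verbatim, the decomposition arguments from \autoref{sec:dihedral} wherever they did not rely on the lifting condition, and then invoke the self-duality of \autoref{prop:oddoneout} to handle the one case where liftability was genuinely used.

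First, following the general strategy of \autoref{sec:strategy}, any transfer system $\RR \in T(n)$ has a unique minimal fibrant element $(a,b) \in [1] \times [n]$, and restricts to a pair $(\RR_1,\RR_2)$ on $(a,b)_\uparrow^{\mathsf c}$ and $(a,b)_\uparrow$. This gives the partition
\[
T(n) = \coprod_{\substack{1 \leqslant k, \ell \leqslant n+1 \\ (a,b) \in [1] \times [n]}} T(n,k,\ell,(a,b)),
\]
so the main task is to produce the three case-by-case recursions. For $(a,b) = (0,b)$ with $b > 0$, I would repeat the argument of \autoref{prop:sum1}: restrict $\RR$ to $(0,b)_\uparrow^{\mathsf c} \cong [1]\times[b-1]$ and $(0,b)_\uparrow \cong [1]\times[n-b]$, note that $(0,b-1)\,\RR\,(1,b-1)$ is forced by $(0,b)\,\RR\,(1,b)$ via the restriction axiom, and verify that pasting a Tamari-interval transfer system $\RR_1 \in \mathrm{Tam}(b-1)$ to an $\RR_2 \in T(n-b,(0,0))$ yields a restriction closed pair. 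The point to emphasize is that the case analysis in that proof (splitting on whether $(e,f) \leqslant (1,b-1)$ or $(e,f) \geqslant (0,b)$) is purely lattice-theoretic and never uses the lifting condition $(L)$. The same is true for \autoref{prop:sum2}: the recursion for $(a,b) = (0,0)$ comes from restricting to $(0,1)_\uparrow$ together with a choice of maximal $b \leqslant n$ with $(1,0)\,\RR\,(1,b)$, and the stationary/extendable bookkeeping carries over unchanged. Both propositions therefore transport without modification to the non-liftable setting, yielding the first two lines of the recursion.

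For the remaining case $(a,b) = (1,b)$, \autoref{rem:onlylifthere} precisely identifies this as the only place the liftability hypothesis played a role in \autoref{thm:liftable}: liftability forced $(1,b)\,\RR\,(1,n)$ to imply $(0,b)\,\RR\,(1,b)$, contradicting the minimality of $(1,b)$ and killing the set for $b < n$. Without liftability no such contradiction arises, so we need a different handle. Here I would invoke \autoref{prop:oddoneout}, which establishes a bijection
\[
T(n,k,\ell,(a,b)) \longleftrightarrow T(n,\ell,k,(1-a,n-b))
\]
induced by the canonical involution $(a,b) \mapsto (1-a,n-b)$ on $[1] \times [n]$. Applied to $(a,b) = (1,b)$ this gives immediately $|T(n,k,\ell,(1,b))| = |T(n,\ell,k,(0,n-b))|$, which is the third line. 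Since $(0,n-b)$ is of one of the forms already handled (either $(0,0)$ when $b=n$, or $(0,b')$ with $b' > 0$ when $b < n$), the recursion is well-founded.

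Finally, summing the three formulas over all $(k,\ell,(a,b))$ gives $|T(n)|$, matching the stated aggregate identity. The only substantive step is checking that the two propositions from \autoref{sec:dihedral} really are ``lifting-free''; beyond that, everything is a direct appeal to duality. I do not anticipate any real obstacle, since \autoref{rem:onlylifthere} has already flagged the sole place where liftability entered the earlier argument.
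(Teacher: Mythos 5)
Your proposal is correct and follows essentially the same route as the paper, whose proof is simply the observation that \autoref{prop:oddoneout}, \autoref{thm:liftable}, and \autoref{rem:onlylifthere} combine to give the result: the first two recursions carry over because \autoref{prop:sum1} and \autoref{prop:sum2} never invoke condition $(L)$, and the $(1,b)$ case is handled by duality. You spell out the verification in more detail than the paper does, but there is no substantive difference in the argument.
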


\begin{corollary}
Let $n \geqslant 0$. Then the number of homotopically distinct $N_\infty$ operads for $C_{qp^n}$ is given by the recursion in \autoref{thm:nonliftable}.
\end{corollary}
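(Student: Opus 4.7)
The plan is to reduce the corollary to \autoref{thm:nonliftable} via two already-established identifications. First, I would verify (or invoke) the lattice isomorphism $\Sub(C_{qp^n}) \cong [1] \times [n]$: the subgroups of $C_{qp^n}$ are exactly the cyclic groups of order $q^a p^b$ for $(a,b) \in \{0,1\} \times \{0,\dots,n\}$, and inclusion corresponds to coordinatewise $\leqslant$. This is the same identification flagged in the introduction and used throughout the paper.

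Next I would cite the Blumberg--Hill correspondence together with its refinement (as phrased in \cite{bbr} and~\cite{fooqw}): for any finite group $G$, homotopy classes of $N_\infty$ operads for $G$ are in natural bijection with transfer systems on the subgroup lattice $\Sub(G)$. Applied to $G = C_{qp^n}$ and combined with the previous paragraph, this shows that the number of homotopy classes of $N_\infty$ operads for $C_{qp^n}$ equals $|\mathsf{Tr}([1]\times[n])| = |T(n)|$.

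Finally I would invoke \autoref{thm:nonliftable}, which expresses $|T(n)|$ as a sum of the quantities $|T(n,k,\ell,(a,b))|$ satisfying the stated recursion (where the values of $|\mathrm{Tam}(b-1,i)|$ are the explicit numbers provided in \autoref{prop:tamaricounts}). Chaining the three identifications yields the desired recursive formula.

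The only substantive issue is bookkeeping in the first two steps: one must be careful that the lattice isomorphism $\Sub(C_{qp^n}) \cong [1]\times[n]$ is the one under which the transfer-system axioms (formulated via meets) on the left match those on the right, and that the homotopy-theoretic correspondence is indeed a bijection and not merely a surjection. Both are by now standard, so I expect no real obstacle — the corollary is essentially a relabelling of \autoref{thm:nonliftable}.
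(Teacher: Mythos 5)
Your proposal is correct and matches the paper's (implicit) argument exactly: the corollary is indeed just \autoref{thm:nonliftable} combined with the lattice isomorphism $\Sub(C_{qp^n})\cong[1]\times[n]$ and the Blumberg--Hill correspondence (as completed by later authors) between homotopy classes of $N_\infty$ operads and transfer systems on the subgroup lattice. The paper offers no separate proof because, as you say, the statement is essentially a relabelling of the theorem.
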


\begin{computation}\label{comp:sn}
Following the methods of \autoref{comp:tn} we are once again able to efficiently compute values of $|T(n)|$; for small values of $n$, these are presented in \autoref{tab:sncomputation}, and larger scale behavior appears at the end of the paper in \autoref{fig:values}.
\begin{table}[h]
\begin{tabular}{|c|c|}
\hline
\rowcolor[HTML]{C0C0C0} 
\textbf{$n$} & \textbf{$|T(n)|$} \\ \hline
0            & 2                 \\ \hline
1            & 10                 \\ \hline
2            & 68                \\ \hline
3            & 544               \\ \hline
4            & 4828              \\ \hline
5            & 46124             \\ \hline
6            & 465932            \\ \hline
7            & 4919062           \\ \hline
8            & 53832832          \\ \hline
9            & 607000122         \\ \hline
10           & 7019272236        \\ \hline
\end{tabular}
\vspace{3mm}
\caption{Values of $|T(n)|$, the number of transfer systems for $C_{qp^n}$, computed via \autoref{thm:nonliftable}.}\label{tab:sncomputation}
\end{table}
\end{computation}

\begin{remark}
The strategy outlined \autoref{sec:strategy} is extremely general, and one could envisage running the machine for other fundamental lattices such as $[m] \times [n]$ and $[1]^n$. What is evident from the discussions in \autoref{sec:dihedral} and \autoref{sec:cyclic} is that the power of the computation can be improved by understanding the inherent structures of transfer systems and how they arise (e.g., the existence of suitable liftable families and dualities).
\end{remark}

\begin{remark}
As with $|L(n)|$, it would be desirable to derive a closed formula for $|T(n)|$ via generating function or bijective techniques.
\end{remark}

\section{Restricted Tamari intervals}\label{sec:restrict}

In this section we will study the term $\mathrm{Tam}(n,k)$ which appears in \autoref{prop:sum1}, \autoref{thm:liftable}, and \autoref{thm:nonliftable}.

We begin by recalling from \autoref{defn:tam1} that we denote by $\mathrm{Tam}(n)$ the set of all transfer systems $\RR$ with $(0,n)\, \RR\, (1,n)$ and $\mathrm{Tam}(n,k) =\mathrm{Tam}(n) \cap L(n,k)$. 

Recall from~\cite[Proposition 2.23, Proposition 4.17]{boor} that $\mathsf{Tr}([n])$ admits a lattice bijection to the Tamari lattice. As such for $\RR$ and $\RR'$ transfer systems on $[n]$, we say that $\RR \leqslant \RR'$ is a \emph{Tamari interval}. The next lemma justifies our choice of notation for $\mathrm{Tam}(n)$. We note that if $\RR$ is a transfer system on $[n]$, then we --- analogously to the $[1] \times [n]$ case --- say that $b \in [n]$ is \emph{stationary} for $\RR$ if there exists no $d > b \geqslant c$ with $(1,c)\, \RR\, (1,d)$.

\begin{lemma}
Let $n \geq 0$. Then $\mathrm{Tam}(n)$ is in bijection with Tamari intervals in $\mathsf{Tr}[n]$. In particular $\mathrm{Tam}(n,k)$ is in bijection with the Tamari intervals $\RR \leqslant \RR'$ in $\mathsf{Tr}[n]$ such that $\RR$ has $k$ stationary elements.
\end{lemma}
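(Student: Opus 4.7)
The plan is to construct an explicit bijection $\mathrm{Tam}(n) \longleftrightarrow \{\text{Tamari intervals in } \mathsf{Tr}([n])\}$ by sending a liftable transfer system $\RR$ on $[1] \times [n]$ satisfying $(0,n)\,\RR\,(1,n)$ to the pair $(\RR_1, \RR_0)$, where $\RR_a$ is the restriction of $\RR$ to the row $\{(a,i) : 0 \leqslant i \leqslant n\} \cong [n]$. First I would show that $\RR_0$ and $\RR_1$ really are transfer systems on $[n]$ (immediate from the restriction axiom) and that $\RR_1 \subseteq \RR_0$ as relations, which is the content of being a Tamari interval under the lattice isomorphism of \cite[Propositions 2.23, 4.17]{boor}. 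For the containment, suppose $(1,i)\,\RR\,(1,j)$ with $i < j$; liftability gives $(0,i)\,\RR\,(1,i)$, and transitivity gives $(0,i)\,\RR\,(1,j)$. Applying the restriction axiom with $(0,j) \leqslant (1,j)$ (and noting $(0,i)^{\downarrow} \cap (0,j)^{\downarrow}$ has maximum $(0,i)$) yields $(0,i)\,\RR\,(0,j)$, i.e.\ $i\,\RR_0\,j$.

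Next I would establish injectivity by observing that the hypothesis $(0,n)\,\RR\,(1,n)$ combined with the restriction axiom forces $(0,i)\,\RR\,(1,i)$ for every $i$ (take $w = (0,i)$ as the maximum of $(0,n)^\downarrow \cap (1,i)^\downarrow$). Consequently, for $i \leqslant j$ we have $(0,i)\,\RR\,(1,j)$ if and only if $i\,\RR_0\,j$: the ``if'' direction is transitivity through $(0,j)\,\RR\,(1,j)$, and the ``only if'' direction follows by restricting with $(0,j) \leqslant (1,j)$. Together with the fact that no relation from the top row to the bottom row can exist (by refinement of $\leqslant$), this shows $\RR$ is completely determined by $(\RR_0, \RR_1)$.

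For surjectivity, given any pair $\RR_1 \subseteq \RR_0$ of transfer systems on $[n]$, I would define $\RR$ on $[1] \times [n]$ by the formulas forced by the preceding paragraph and verify that it is a liftable transfer system with $(0,n)\,\RR\,(1,n)$. Reflexivity, transitivity, and the liftability condition $(L)$ are all routine; the main obstacle is the restriction axiom, which requires a case analysis according to which rows contain the two endpoints of $x\,\RR\,y$ and the comparison point $z \leqslant y$. The pure cases (all points in one row) reduce to the transfer system axioms for $\RR_0$ or $\RR_1$. The mixed cases — where $x$ and $z$ lie in different rows — are where the hypothesis $\RR_1 \subseteq \RR_0$ is essential: relations forced on the top row must be verifiable on the bottom row after passing through the diagonals $(0,i)\,\RR\,(1,i)$.

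Finally, for the refined statement about $\mathrm{Tam}(n,k)$, I would note that stationary elements of a liftable transfer system on $[1] \times [n]$ all lie on the top row, and that $(1,b)$ is stationary for $\RR$ precisely when $b$ is stationary for $\RR_1$ in the sense defined for transfer systems on $[n]$, since $(1,c)\,\RR\,(1,d) \iff c\,\RR_1\,d$. Thus $\RR \in \mathrm{Tam}(n,k)$ if and only if its image has $k$ stationary elements in its smaller member $\RR_1$, completing the refined bijection.
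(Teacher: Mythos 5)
Your proposal is correct and follows essentially the same route as the paper: show the verticals $(0,i)\,\RR\,(1,i)$ are forced, show the diagonals are determined by the bottom row, and conclude that $\RR$ is equivalent to the data of its top and bottom rows. You are somewhat more thorough than the paper, which leaves both the containment $\RR_1 \subseteq \RR_0$ and the surjectivity verification implicit.
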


\begin{proof}
By restriction, a transfer system in $\mathrm{Tam}(n)$ has all vertical relations $(0,a)\, \RR$ $(1,a)$ for $0 \leqslant a \leqslant n$. We have the diagonal $(0,b)\, \RR\, (1,c)$ if and only if $(0,b)\, \RR\, (0,c)$. Indeed, the forward implication follows by pullback closure, and the reverse implication is given by composition with $(0,c)\, \RR\, (1,c)$. Therefore, such an $\RR$ restricted to the bottom row determines all diagonals. In particular $\RR$ is determined uniquely by its restriction to the top and bottom, and is therefore the data of a Tamari interval.
\end{proof}

From work of Chapoton~\cite{chapoton}, we know that the cardinality of the set of Tamari intervals for $[n]$ has closed form given by
\[
|\mathrm{Tam}(n)| = \dfrac{2}{(n+1)(n+2)}\binom{4n+5}{n}.
\]
(We warn the reader that our indexing conventions differ from those of \cite{chapoton}). The following proposition provides a closed formula for $|\mathrm{Tam}(n,k)|$.

\begin{prop}\label{prop:tamaricounts}
Let $n \geqslant 0$ and $1 \leqslant k \leqslant n+1$. Then

\[
|\mathrm{Tam}(n,k)| =  \dfrac{2(2k + 1)!(4n - 2k + 3)!}{(k - 1)!(k + 1)!(n - k + 1)!(3n - 
       k + 4)!} 
\]
\end{prop}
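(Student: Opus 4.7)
The plan is to establish a recursion for $|\mathrm{Tam}(n,k)|$ via the $\odot$-decomposition of \autoref{sec:strategy}, and then extract the closed form via generating functions and Lagrange inversion.

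Given $\RR \in \mathrm{Tam}(n)$, liftability combined with $(0,n)\,\RR\,(1,n)$ forces the minimal fibrant element to be of the form $(0,b)$ for some $0 \leqslant b \leqslant n$. Applying the $\odot$-decomposition, I would split $\RR$ as $\RR_1 \amalg \RR_2$ with $\RR_1$ on $(0,b)^{\mathsf c}_\uparrow \cong [1]\times[b-1]$ and $\RR_2$ on $(0,b)_\uparrow \cong [1]\times[n-b]$. The arguments from \autoref{prop:sum1} and \autoref{prop:sum2} adapt to show $\RR_1 \in \mathrm{Tam}(b-1)$ (since $(0,b-1)\,\RR\,(1,b-1)$ is forced by restriction along $(1,b-1)\leqslant(1,n)$) and $\RR_2 \in \mathrm{Tam}^0(n-b)$, the subset of $\mathrm{Tam}(n-b)$ having $(0,0)$ as its minimal fibrant element. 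Because no relation in $\RR$ crosses $(0,b)$, the element $(1,b-1)$ is automatically stationary in $\RR$, so the stationary elements of $\RR$ partition cleanly into those of $\RR_1$ and the shifted ones of $\RR_2$. This yields
\[
|\mathrm{Tam}(n,k)| = \sum_{b=0}^{n}\sum_{i+j=k}|\mathrm{Tam}(b-1,i)|\cdot|\mathrm{Tam}^0(n-b,j)|,
\]
under the convention that $|\mathrm{Tam}(-1,0)|=1$ and vanishes otherwise. A parallel decomposition of $\RR \in \mathrm{Tam}^0(n)$, splitting on the maximal $b$ with $(1,0)\,\RR\,(1,b)$ in the style of \autoref{prop:sum2}, produces a companion recursion for $|\mathrm{Tam}^0(n,k)|$.

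I would then pass to the bivariate generating functions $F(x,y)=\sum|\mathrm{Tam}(n,k)|\,x^n y^k$ and $F^0(x,y)=\sum|\mathrm{Tam}^0(n,k)|\,x^n y^k$, whose joint functional equation is structurally analogous to the one Chapoton derives for the unrefined count $|\mathrm{Tam}(n)|$. Lagrange inversion should then extract $[x^n y^k]F(x,y)$ in closed form, which a final hypergeometric simplification matches with the proposed expression. The main obstacle I anticipate is precisely this last algebraic step: the Lagrange-inversion coefficient extraction produces a multi-index sum whose evaluation requires a Vandermonde-type identity, and careful bookkeeping is needed to keep the shifts in $k$ consistent. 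The small values $(|\mathrm{Tam}(1,1)|,|\mathrm{Tam}(1,2)|)=(1,2)$ and $(|\mathrm{Tam}(2,1)|,|\mathrm{Tam}(2,2)|,|\mathrm{Tam}(2,3)|)=(3,5,5)$, together with the agreement of the row sums with Chapoton's formula for $|\mathrm{Tam}(n)|$, provide practical safeguards against indexing errors.
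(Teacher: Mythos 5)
Your route is genuinely different from the paper's. The paper does no generating-function work at all: it composes the bijection of \cite{bbr} between transfer systems on $[n]$ and rooted binary trees with the classical bijection to Dyck paths, checks that the number of stationary elements becomes one less than the number of contacts of the Dyck path with the horizontal axis, and then quotes the refined enumeration of Tamari intervals by contacts of the lower path from \cite[Corollary 11]{BMFPR}. Your decomposition, by contrast, stays entirely inside the transfer-system world, and the recursions you write down are correct: they are exactly the $\mathrm{Tam}$-restrictions of \autoref{prop:sum1} and \autoref{prop:sum2}. (Two small remarks: the minimal fibrant element of any $\RR\in\mathrm{Tam}(n)$ has the form $(0,b)$ simply because $(0,n)$ is fibrant and the minimal fibrant element is the meet of all fibrant elements, so liftability is not needed there; and the additivity of the stationary statistic across the splitting holds because no top-row relation crosses the cut at $(0,b)$, as you say.) If completed, your argument would be self-contained and would bypass both the tree/Dyck-path translation and the external citation.

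The gap is the last step, and it is where all of the content lives. The system you obtain --- $F=(1+xF)F^0$ together with $F^0$ expressed as a divided difference of $F$ in the catalytic variable $y$ --- is a \emph{quadratic} functional equation with one catalytic variable, not a Lagrange-inversion setup: there is no single equation of the form $F = x\,\phi(F)$ to invert. Solving such equations is precisely the technical achievement of Chapoton and of Bousquet-M\'elou--Fusy--Pr\'eville-Ratelle (the kernel/quadratic method for catalytic variables, or an algebraic parametrization followed by Lagrange inversion in the \emph{parametrizing} variable, not in $x$ or $y$), and the subsequent ``hypergeometric simplification'' is nontrivial. As written, the proposal defers the entire analytic content of the proposition to a step described as routine. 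Either carry out the catalytic-variable analysis in full, or do what the paper does: translate the stationary statistic into a known statistic on Tamari intervals and cite the established refined count.
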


\begin{proof}
An explicit bijection between transfer systems on $[n]$ and rooted binary trees with $(n+1)$ internal nodes was constructed in~\cite{bbr}. One can check that under this bijection that the number of stationary elements of a transfer system $\RR$ on $[n]$ is equal to the number of elements on the corresponding binary tree whose path from the root is a straight line to the left.

Using the classical bijection between binary trees with $n+1$ nodes and Dyck paths of length $2n+2$, we see that the number of stationary elements is also given by one less than the number of times the corresponding Dyck path touches the horizontal axis.

The result then follows from~\cite[Corollary 11]{BMFPR} and the first remark following it.
\end{proof}

\begin{table}[h]
\renewcommand\arraystretch{1.5}
\begin{tabular}{|
>{\columncolor[HTML]{C0C0C0}}c |c|c|c|c|c|c|c|}
\hline
\diagbox[width=\dimexpr \textwidth/10\relax, height=1.1cm]{$n$}{$k$}& \cellcolor[HTML]{C0C0C0}{1} & \cellcolor[HTML]{C0C0C0}{2} & \cellcolor[HTML]{C0C0C0}{3} & \cellcolor[HTML]{C0C0C0}{4} & \cellcolor[HTML]{C0C0C0}{5} & \cellcolor[HTML]{C0C0C0}{6} & \cellcolor[HTML]{C0C0C0}{7} \\ \hline
{0}                  & 1                                  & 0                                  & 0                                  & 0                                  & 0                                  & 0                                  & 0                                  \\ \hline
{1}                  & 1                                  & 2                                  & 0                                  & 0                                  & 0                                  & 0                                  & 0                                  \\ \hline
{2}                  & 3                                  & 5                                  & 5                                  & 0                                  & 0                                  & 0                                  & 0                                  \\ \hline
{3}                  & 13                                 & 20                                 & 21                                 & 14                                 & 0                                  & 0                                  & 0                                  \\ \hline
{4}                  & 68                                 & 100                                & 105                                & 84                                 & 42                                 & 0                                  & 0                                  \\ \hline
{5}                  & 399                                & 570                                & 595                                & 504                                & 330                                & 132                                & 0                                  \\ \hline
{6}                  & 2530                               & 3542                               & 3675                               & 3192                               & 2310                               & 1287                               & 429                                \\ \hline
\end{tabular}
\vspace{3mm}
\caption{Values of $|\mathrm{Tam}(n,k)|$ for small values of $n$ and $k$ using \autoref{prop:tamaricounts}.}\label{tab:master}
\end{table}

\begin{remark}
The values appearing in \autoref{tab:master} make up some very well known number sequences as we now outline. The triangular array $|\mathrm{Tam}(n,k)|$ appears in a reindexed form in~\cite{brown} where one enumerates triangulations of $k+3$-gons with $n+1$-internal vertices. In particular we have:
\begin{itemize}
\item The major diagonal where $k=n+1$ retrieves the Catalan numbers.
\item The sum of the row $|\mathrm{Tam}(n,-)|$ computes the number of Tamari intervals for $[n]$.
\item The column $|\mathrm{Tam}(-,1)|$ is the number of Tamari intervals for $[n-1]$.
\item The column $|\mathrm{Tam}(-,k)|$ is the number of triangulations of a $(k+2)$-gon with $n$ internal nodes.
\end{itemize}
\end{remark}

\section{Maximally extendable $D_{p^n}$ transfer systems and Schr\"{o}der numbers}\label{sec:max1}

In the remainder of this paper, we will explore a certain type of transfer system on $[1] \times [n]$ in the liftable and nonliftable cases, the enumeration of which retrieve interesting number sequences. We begin by introducing the class that we are interested in.

\begin{defn}
A (liftable) transfer system $\RR$ for $[1] \times [n]$ is \emph{maximally extendable} if it has $n+1$ extendable elements.
\end{defn}

\begin{remark}
In a more intuitive description, a transfer system is maximally extendable if and only if its restriction to the bottom row is the maximal transfer system on $[n]$. As such, for $G = D_{p^n}$ or $C_{qp^n}$ (in the liftable and general scenarios, respectively) these are the transfer systems associated with $G$-$N_\infty$ operads restricting to genuine $G$-$E_\infty$ operads on $C_{p^n}\leqslant G$.
\end{remark}

We will once again begin with the case of liftable transfer systems.

\begin{defn}
Let $n \geqslant 0$. Then $\mathfrak{S}_n$, the $n^\mathrm{th}$ \emph{(large) Schr\"{o}der number}, is the number of lattice paths from the southwest corner $(0,0)$ of an $n \times n$ grid to the northeast corner $(n,n)$ using single steps north $(0,1)$, northeast $(1,1)$, or east $(1,0)$, that do not rise above the $SW$-$NE$ diagonal. (Such paths are called \emph{royal ($n$-)paths}.)
\end{defn}

\begin{prop}[\cite{MR3662973}]\label{prop:schrodercount}
If $n = 0$ then $\mathfrak{S}_n = 1$. For $n\ge 1$, the large Schr\"{o}der numbers satisfy the recurrence
\[
  \mathfrak S_n = 3\mathfrak S_{n-1} + \sum_{k=1}^{n-2} \mathfrak S_k \mathfrak S_{n-k-1}.
\]
Additionally, for $n \geqslant 1$ we have
\[
\mathfrak{S}_n = \sum_{1 \leqslant k \leqslant n} \mathrm{Nar}(n,k) \cdot 2^k = \sum_{1 \leqslant k \leqslant n} \dfrac{1}{n} \binom{n}{k} \binom{n}{k-1} \cdot 2^k
\]
where $\mathrm{Nar}(n,k)$ is the $(n,k)$-th Narayana number.
\end{prop}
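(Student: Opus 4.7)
The plan is to prove both parts by standard combinatorial decompositions of royal paths. The base cases $\mathfrak S_0 = 1$ and $\mathfrak S_1 = 2$ are immediate; as a sanity check, the displayed recurrence is really only valid for $n \geq 2$, since for $n = 1$ it reads $3\mathfrak S_0 = 3 \neq 2$, so the $n = 1$ step must be verified by direct enumeration.

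\emph{Recurrence via first return to the diagonal.} For $n \geq 2$, I would partition royal $n$-paths by their first step and, in one subcase, by their first return to the diagonal. The first step must be $NE$ or $E$, since $N$ would immediately place $(0,1)$ above the diagonal. If the first step is $NE$, the remainder from $(1,1)$ to $(n,n)$ is an arbitrary royal $(n-1)$-path, contributing $\mathfrak S_{n-1}$. If the first step is $E$, let $k \geq 1$ be minimal with $(k,k)$ on the path. The case $k = 1$ forces the second step to be $N$ and leaves another royal $(n-1)$-path. For $k \geq 2$, the subpath from $(1,0)$ to $(k,k-1)$ lies strictly below the diagonal and, after translating by $(-1,0)$, becomes a royal $(k-1)$-path, while the tail from $(k,k)$ is a royal $(n-k)$-path. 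Summing and pulling out the $k = n$ term $\mathfrak S_{n-1}\mathfrak S_0 = \mathfrak S_{n-1}$ produces $\mathfrak S_n = 3\mathfrak S_{n-1} + \sum_{k=1}^{n-2}\mathfrak S_k \mathfrak S_{n-k-1}$.

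\emph{Narayana identity via peak marking.} For the closed-form identity I would build a bijection between royal $n$-paths and pairs consisting of a Dyck $n$-path (a monotone path from $(0,0)$ to $(n,n)$ using only $N$ and $E$ steps and staying weakly below the diagonal) together with a subset of its peaks (consecutive $EN$ patterns). The forward map replaces each $NE$ step of a royal path by the pattern $EN$; this preserves the endpoint and the diagonal constraint, and each $NE$ becomes a peak of the resulting Dyck path, with the marked subset recording exactly those peaks. The inverse replaces each marked peak $EN$ with $NE$, which only removes the safe intermediate lattice point $(x+1,y)$. Grouping Dyck $n$-paths by their number of peaks $k$ (counted by $\mathrm{Nar}(n,k)$) and choosing the marked subset in $2^k$ ways delivers $\mathfrak S_n = \sum_k \mathrm{Nar}(n,k)\cdot 2^k$. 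The closed form $\mathrm{Nar}(n,k) = \frac{1}{n}\binom{n}{k}\binom{n}{k-1}$ is classical, provable for instance by a cycle-lemma or reflection argument.

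The main substantive check in both parts is well-definedness: for the recurrence, that the three cases reassemble each royal path exactly once; for the Narayana identity, that an unmarked peak of the Dyck image genuinely arises from a consecutive $E$-then-$N$ pair in the original royal path, so the factor $2^k$ does not overcount. Neither verification is difficult once written out on a small example, but together they are the conceptual heart of the argument.
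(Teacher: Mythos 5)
Your proof is correct, and it is worth noting that the paper does not actually prove this proposition at all --- it is imported from the literature (the bracketed citation \cite{MR3662973}), so your self-contained combinatorial argument is strictly more than the paper offers. Both of your decompositions check out: the first-return-to-the-diagonal analysis (first step $NE$, or $E$ with first return at $(1,1)$, or $E$ with first return at $(k,k)$ for $k\geqslant 2$, giving $\mathfrak S_{n-1}+\mathfrak S_{n-1}+\sum_{k=2}^{n}\mathfrak S_{k-1}\mathfrak S_{n-k}$) is exactly the same style of argument the paper itself uses later for the \emph{refined} Schr\"oder numbers in \autoref{lemma:royal_recurrence}, so your route is consistent with the paper's toolkit; and the peak-marking bijection (royal $n$-paths $\leftrightarrow$ monotone subdiagonal paths with a marked subset of their $EN$-peaks) is a clean way to get the Narayana identity, with the only delicate points --- that distinct $EN$ occurrences cannot share a step, and that the marking records precisely which peaks came from diagonal steps --- correctly identified and easily verified. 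Your observation that the recurrence as printed fails at $n=1$ (it gives $3\mathfrak S_0=3$ rather than $\mathfrak S_1=2$, because the $k=n$ term being pulled out of the convolution does not exist when $n=1$) is accurate and amounts to a small erratum: the recurrence should be asserted only for $n\geqslant 2$, with $\mathfrak S_1=2$ handled directly. One presentational nit: in the $k\geqslant 2$ case you should say explicitly why the path passes through $(k,k-1)$, namely that the step into $(k,k)$ cannot be $NE$ (that would put $(k-1,k-1)$ on the path, contradicting minimality of $k$) nor $E$ (its source $(k-1,k)$ lies above the diagonal), so it must be $N$; this is the one-line justification your ``reassemble exactly once'' check needs.
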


\begin{computation}\label{com:schroder}
\autoref{tab:shroder} provides a calculation of $\mathfrak{S}_n$ for small values of $n$.
\begin{table}[h]
\begin{tabular}{|c|c|}
\hline
\rowcolor[HTML]{C0C0C0} 
\textbf{$n$} & \textbf{$\mathfrak{S}_n$} \\ \hline
0            & 1                 \\ \hline
1            & 2                 \\ \hline
2            & 6                 \\ \hline
3            & 22                \\ \hline
4            & 90               \\ \hline
5            & 394              \\ \hline
6            & 1806             \\ \hline
7            & 8558            \\ \hline
8            & 41586           \\ \hline
9            & 206098          \\ \hline
10            & 1037718         \\ \hline
\end{tabular}
\vspace{3mm}
\caption{Values of $\mathfrak{S}_n$ computed via \autoref{prop:schrodercount}.}\label{tab:shroder}
\end{table}
\end{computation}

We can produce refined statistics on royal paths by tracking the number of diagonal returns each path has, using the convention that a northeast step on the diagonal counts as a return. Let $\mathfrak S_n(k)$ denote the number of royal $n$-paths with $k$ returns to the diagonal; call these the \emph{refined Schr\"oder numbers}.

\begin{prop}[{\cite{oeis:sch}}]\label{prop:refined_schroeder}
The refined Schr\"oder numbers take the values
\[
  \mathfrak S_n(k) =
  \begin{cases}
    0 &\text{if }n<k,\\
    2^n &\text{if }n=k,\\
    2^k\frac{k}{n-k}\sum_{p=1}^{n-k}\binom{n-k}{p}\binom{n-1+p}{p-1}&\text{if }n>k.
  \end{cases}
\]
Furthermore,
\[
  \sum_{k=1}^n \mathfrak S_n(k) = \mathfrak S_n.
\]
\end{prop}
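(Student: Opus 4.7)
The strategy is a first-return decomposition. Call a royal $n$-path \emph{primitive} if it touches the diagonal only at its two endpoints. Every royal path with $k$ returns factors uniquely as a concatenation of $k$ primitive pieces whose lengths sum to $n$. A primitive of length $1$ is either the single NE step or the pair E followed by N, so $P_1 = 2$. A primitive of length $m \ge 2$ must begin with E and end with N (otherwise it would rise above or revisit the diagonal), and after stripping these steps the middle, shifted left by one unit, is in bijection with a royal $(m-1)$-path; hence $P_m = \mathfrak{S}_{m-1}$. The primitive generating function is therefore $P(x) = x(1 + S(x)) =: xT(x)$, and concatenation yields
\[
  \sum_{n \ge 0}\mathfrak{S}_n(k)\, x^n \;=\; P(x)^k \;=\; x^k T(x)^k,
  \qquad \text{so}\qquad
  \mathfrak{S}_n(k) \;=\; [x^{n-k}]\, T(x)^k.
\]
The cases $n<k$ and $n=k$ fall out immediately: no primitive has length zero, and $\mathfrak{S}_k(k) = T(0)^k = 2^k$ since $S(0) = 1$.

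For $n > k$ the plan is Lagrange inversion. Combining $S = 1/(1-P)$ with the Schr\"oder functional equation $xS^2 - (1-x)S + 1 = 0$ gives $xT^2 - (1+x)T + 2 = 0$, so $U := T - 2$ satisfies $U = x(U+1)(U+2)$. Setting $\psi(u) = (u+1)(u+2)$, Lagrange inversion applied to $H(u) = (u+2)^k$ yields
\[
  \mathfrak{S}_n(k) \;=\; [x^{n-k}](U+2)^k \;=\; \frac{k}{n-k}\,[u^{n-k-1}]\, (u+1)^{n-k}(u+2)^{n-1}.
\]
Expanding $(u+1)^{n-k}$ in powers of $u$, identifying the $2^k$ factor that arises from the constant term of $(u+2)^{\bullet}$, and applying a Vandermonde-type identity converts this into the stated form $2^k\frac{k}{n-k}\sum_p \binom{n-k}{p}\binom{n-1+p}{p-1}$. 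The summation identity $\sum_{k \ge 1}\mathfrak{S}_n(k) = \mathfrak{S}_n$ is then an immediate consequence of the geometric series $\sum_{k \ge 1} P(x)^k = P(x)/(1 - P(x)) = S(x) - 1$, whose $x^n$-coefficient for $n \ge 1$ is $\mathfrak{S}_n$.

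The main obstacle is the final matching of binomial forms: a direct coefficient extraction produces $\frac{k\cdot 2^k}{n-k}\sum_p \binom{n-k}{p}\binom{n-1}{n-k-1-p}2^p$, which agrees numerically with the stated formula but requires either a hypergeometric manipulation or a verification that both expressions satisfy the same recurrence. A convenient alternative is the recurrence $\mathfrak{S}_n(k) = \sum_{p \ge 1}\binom{k}{p}2^{k-p}\mathfrak{S}_{n-k}(p)$ obtained by conditioning on which $p$ of the $k$ primitive pieces have length $\ge 2$; combined with the base cases $\mathfrak{S}_n(k) = 0$ for $n<k$ and $\mathfrak{S}_k(k) = 2^k$, this recurrence uniquely determines $\mathfrak{S}_n(k)$ and can be checked directly against the stated closed form.
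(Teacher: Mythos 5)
The paper offers no proof of this proposition at all --- it is imported wholesale from the OEIS --- so your argument is not competing with an in-text proof, and most of it is correct. The first-return decomposition giving $\sum_n \mathfrak S_n(k)x^n = P(x)^k$ with $P(x)=x(1+S(x))$ is right and correctly disposes of the cases $n\le k$ and of the identity $\sum_k\mathfrak S_n(k)=\mathfrak S_n$; the functional equation $U=x(U+1)(U+2)$ for $U=T-2$ checks out, and Lagrange inversion correctly yields
\[
\mathfrak S_n(k)=\frac{k}{n-k}\,[u^{n-k-1}]\,(u+1)^{n-k}(u+2)^{n-1}\qquad(n>k).
\]
But the proof is not complete as written, and the fallback you offer to close it is wrong. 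The final identification with the stated closed form is only asserted (``agrees numerically''), and the rescue recurrence $\mathfrak S_n(k)=\sum_{p\ge1}\binom{k}{p}2^{k-p}\mathfrak S_{n-k}(p)$ is false: at $n=3$, $k=1$ the right side is $\mathfrak S_2(1)=2$ while $\mathfrak S_3(1)=6$. The error is that conditioning on which $p$ of the $k$ primitive pieces have length $\ge 2$ gives $\sum_p\binom{k}{p}2^{k-p}[x^{n-k}](S(x)-1)^p$, and $[x^{n-k}](S-1)^p$ counts sequences of $p$ nonempty royal paths of total length $n-k$, which is \emph{not} $\mathfrak S_{n-k}(p)=[x^{n-k}]P(x)^p$ (sequences of $p$ \emph{primitive} pieces).

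The gap is genuinely fillable, and cleanly, so you should replace the hand-waving with the following. Write $m=n-k$. Since $\binom{n-1+p}{p-1}=\binom{n-1+p}{n}=[v^n](1+v)^{n-1+p}$ and the $p=0$ term vanishes for $n\ge1$, the stated sum is a coefficient:
\[
\sum_{p=1}^{m}\binom{m}{p}\binom{n-1+p}{p-1}=[v^n]\,(v+1)^{n-1}(v+2)^{m}.
\]
On the other hand, substituting $u=2/v$ into the degree-$(m+n-1)$ polynomial $(u+1)^m(u+2)^{n-1}=2^{n-1}v^{-(m+n-1)}(v+2)^m(v+1)^{n-1}\big|_{u=2/v}$ and comparing coefficients gives
\[
[u^{m-1}]\,(u+1)^{m}(u+2)^{n-1}=2^{\,n-m}\,[v^{n}]\,(v+1)^{n-1}(v+2)^{m}=2^{k}\,[v^{n}]\,(v+1)^{n-1}(v+2)^{m},
\]
which combined with your Lagrange-inversion output is exactly $2^k\frac{k}{n-k}\sum_{p=1}^{n-k}\binom{n-k}{p}\binom{n-1+p}{p-1}$. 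With this inserted and the false recurrence deleted, your argument is a complete, self-contained proof of a statement the paper only cites.
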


\begin{computation}\label{com:schroder}
\autoref{tab:refined_shroder} provides a calculation of $\mathfrak{S}_n(k)$ for small values of $n$ and $k$.
\begin{table}[h]
\renewcommand\arraystretch{1.5}
\begin{tabular}{|
>{\columncolor[HTML]{C0C0C0}}c |c|c|c|c|c|c|}
\hline
\diagbox[width=\dimexpr \textwidth/10\relax, height=1.1cm]{$n$}{$k$}& \cellcolor[HTML]{C0C0C0}{1} & \cellcolor[HTML]{C0C0C0}{2} & \cellcolor[HTML]{C0C0C0}{3} & \cellcolor[HTML]{C0C0C0}{4} & \cellcolor[HTML]{C0C0C0}{5} & \cellcolor[HTML]{C0C0C0}{6}  \\ \hline
{1}                  & 2                                  & 0                                  & 0                                  & 0                                  & 0                                  & 0                                                                    \\ \hline
{2}                  & 2                                  & 4                                  & 0                                  & 0                                  & 0                                  & 0                                                                    \\ \hline
{3}                  & 6                                 & 8                                 & 8                                 & 0                                 & 0                                  & 0                                                                    \\ \hline
{4}                  & 22                                 & 28                                & 24                                & 16                                 & 0                                 & 0                                                                    \\ \hline
{5}                  & 90                                & 112                                & 96                                & 64                                & 32                                & 0                                                                  \\ \hline
{6}                  & 394                               & 484                               & 416                               & 288                               & 160                               & 64                                                               \\ \hline
\end{tabular}
\vspace{3mm}
\caption{Values of $\mathfrak{S}_n(k)$ computed via \autoref{prop:refined_schroeder}.}\label{tab:refined_shroder}
\end{table}
\end{computation}

While the above result is standard, we were unable to find a proof of the following recurrence in the literature. It will be extremely useful in linking large Schr\"oder numbers with transfer systems.

\begin{lemma}\label{lemma:royal_recurrence}
The refined Schr\"oder numbers satisfy the recurrence relation
\[
  \mathfrak S_n(k) = 2\mathfrak S_n(k-1) + \sum_{p=k}^n \mathfrak S_{n-1}(p)
\]
for $1\le k\le n$.
\end{lemma}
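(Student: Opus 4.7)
The plan is to combine a structural decomposition of royal paths with the Schr\"oder functional equation to derive the recurrence via generating functions.

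First, I would set up the key factorization. Let $T_k(x) := \sum_{n \geq 0} \mathfrak{S}_n(k)\, x^n$. Every royal $n$-path with $k$ returns decomposes uniquely into a concatenation of $k$ \emph{components}, each of which is either a single NE step on the diagonal, or an \emph{excursion} of size $m \geq 1$ (begins with E, ends with N, and whose interior, after shifting coordinates by $(-1,0)$, is a royal $(m-1)$-path weakly below the diagonal). The generating function for a single component is therefore $E(x) = x + x S(x) = x(1 + S(x))$, and consequently $T_k(x) = E(x)^k$.

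Second, I would translate the claimed recurrence into a generating function identity. Shifting the index $n \mapsto n-1$ in the sum yields
\[
\sum_n x^n \sum_{p=k}^n \mathfrak{S}_{n-1}(p) = x \sum_{p \geq k} T_p(x),
\]
and the geometric series collapses to $x E(x)^k/(1 - E(x))$. The Schr\"oder functional equation $x S(x)^2 + (x-1) S(x) + 1 = 0$ is equivalent to $1 - E(x) = 1/S(x)$, so this expression equals $x S(x) E(x)^k$. A further massage using $E(x) S(x) = S(x) - 1$ (a direct consequence of the Schr\"oder equation) rewrites the sum as $(S(x) - 1)\, E(x)^{k-1}$, which is compact enough to compare against $T_k(x) - 2 T_{k-1}(x) = (E(x) - 2) E(x)^{k-1}$.

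Finally, to close the argument I would divide through by $E(x)^{k-1}$ and verify the resulting algebraic identity relating $E(x)$, $S(x)$, and their low-degree coefficients, feeding in the Schr\"oder functional equation as needed. The main obstacle I anticipate lies precisely at this last step: the reduced identity $E(x) - 2 \stackrel{?}{=} x(S(x) - 1)$ is delicate, and reconciling the boundary value $\mathfrak{S}_k(k) = 2^k$ with the claimed indexing on the first right-hand summand $2 \mathfrak{S}_n(k-1)$ (as opposed to $2 \mathfrak{S}_{n-1}(k-1)$, which is what falls out most naturally from a first-component decomposition) may require a complementary bijective argument. Specifically, one might mark a distinguished component of each royal $n$-path with $k-1$ returns to exhibit a 2-to-1 correspondence onto the portion of $\mathcal{R}_{n,k}$ not captured by the telescoping sum, and this combinatorial step is where I would expect the real work of the proof to concentrate.
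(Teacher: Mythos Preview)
Your instinct in the final paragraph is exactly right, and the ``obstacle'' you anticipate is not a gap in your argument but a typo in the printed statement: the intended recurrence is
\[
  \mathfrak S_n(k) \;=\; 2\,\mathfrak S_{n-1}(k-1) \;+\; \sum_{p=k}^{n} \mathfrak S_{n-1}(p).
\]
This is what the paper's own proof actually establishes (a path beginning $EN$ or $D$ followed by a royal $(n-1)$-path with $k-1$ returns contributes $2\,\mathfrak S_{n-1}(k-1)$, not $2\,\mathfrak S_n(k-1)$), and the table of refined values confirms it: for instance $\mathfrak S_3(2)=8$, while $2\,\mathfrak S_3(1)+\mathfrak S_2(2)=16$. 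With the corrected first term your generating-function reduction closes immediately: one must check $E(x)-2x = x(S(x)-1)$, i.e.\ $E(x)=x(1+S(x))$, which is your definition of $E$. The reduced identity $E-2\stackrel{?}{=}x(S-1)$ that worried you is genuinely false (it forces $x=1$), so no auxiliary $2$-to-$1$ bijection exists or is needed.

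Your route is nonetheless different from the paper's. The paper argues purely bijectively via the first-return decomposition you allude to: if the first return is at $(1,1)$ the path is $EN$ or $D$ followed by a royal $(n-1)$-path with $k-1$ returns; if the first return is at $(r,r)$ with $r>1$ the path is $EPNQ$, and $PQ$ is a royal $(n-1)$-path with at least $k$ returns, the split being recovered by letting $Q$ carry the last $k-1$ of them. You instead package the same combinatorics into the arch factorization $T_k=E^k$ together with the single algebraic consequence $1-E=1/S$ of the Schr\"oder functional equation. The bijective argument is shorter and self-contained; your generating-function approach is more mechanical and would adapt more readily to weighted or multivariate refinements.
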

\begin{proof}
Write $E$ for east steps, $N$ for north steps, and $D$ for diagonal steps. Given a royal $n$-path, consider the location of its first diagonal return. If this return is in position $(1,1)$, then the path begins $EN$ or $D$ and the rest of the path may be filled in by any royal $(n-1)$-path with $k-1$ returns. This accounts for the $2\mathfrak S_n(k-1)$ term. If the first return is in position $(r,r)$ with $r>1$, then we may write our path as $EPNQ$ where $P$ is a royal $(r-1)$-path and $Q$ is a royal $(n-r)$-path. The path $PQ$ is then a royal $(n-1)$-path with at least $k$ diagonal returns. This decomposition is unique and each such royal $(n-1)$-path arises in this fashion, so this accounts for the term $\sum_{p=k}^n \mathfrak S_{n-1}(p)$.
\end{proof}

We are now prepared to demonstrate the connection between (refined) large Schr\"oder numbers and maximally extendable transfer systems on $D_{p^n}$.

\begin{theorem}\label{thm:schroder}
Let $n \geqslant 0$. Then the number of maximally extendable transfer systems in $L(n)$ is given by $\mathfrak{S}_{n+1}$, and the number of maximally extendable transfer systems in $L(n)$ with exactly $k$ stationary nodes is $\mathfrak S_{n+1}(k)$.
\end{theorem}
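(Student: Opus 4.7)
The plan is to let $M(n, k)$ denote the number of maximally extendable transfer systems in $L(n)$ with exactly $k$ stationary nodes and to show by induction on $n$ that $M(n, k) = \mathfrak{S}_{n+1}(k)$, matching the recurrence of \autoref{lemma:royal_recurrence}. Summing over $k$ yields the first assertion $|M(n)| = \mathfrak{S}_{n+1}$.

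I would begin by restricting the recursion of \autoref{thm:liftable} to the maximally extendable case $\ell = n+1$ and identifying which minimal fibrant elements $(a,b)$ can contribute. Since $(0,d)$ can only be extendable in $\RR$ when $d \in (0,b)_\uparrow$, the decomposition with minimal fibrant $(0,b)$ admits at most $n-b+1$ extendable elements; thus the $(0,b)$ term of \autoref{thm:liftable} vanishes for every $b>0$. The $(1,b)$ terms with $b<n$ vanish by \autoref{prop:sum3}. Hence only the minimal fibrant elements $(0,0)$ and $(1,n)$ survive.

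Plugging $\ell = n+1$ into \autoref{thm:liftable} for these two cases, and using that the maximum possible number of extendable elements on $[1]\times[n-1]$ is $n$ (so $\ell' = n$ is forced in the $(1,n)$ recursion), I obtain
\[
  |L(n,k,n+1,(0,0))| = \sum_{k-1 \le k' \le n} M(n-1,k'),
  \qquad
  |L(n,k,n+1,(1,n))| = M(n-1,k-1).
\]
Adding these and pulling the $k' = k-1$ term out of the sum gives
\[
  M(n,k) = 2\,M(n-1,k-1) + \sum_{k \le k' \le n} M(n-1,k'),
\]
which, after the index shift $n \mapsto n+1$, is exactly the recurrence of \autoref{lemma:royal_recurrence} for $\mathfrak{S}_{n+1}(k)$.

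For the base case $n=0$, the lattice $[1]\times[0]$ has only the two transfer systems (the empty one and $(0,0)\,\RR\,(1,0)$); both are maximally extendable, and in each $(1,0)$ is the unique stationary element, so $M(0,1) = 2 = \mathfrak{S}_1(1)$. The induction then closes. The main obstacle in executing this plan is the stationary-node bookkeeping: in the $(0,0)$ branch, the sum over $k' \ge k-1$ must be seen as encoding the joint choice of the parameter $b$ with $(1,0)\,\RR\,(1,b)$ (which contributes either $k-1 = k'$ via $b=0$ or $k' \ge k$ via a stationary threshold in $\RR'$), while in the $(1,n)$ branch the apex $(1,n)$ itself supplies the extra stationary element that drops $\RR'$ down to $k-1$. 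A final sanity check is to recognize the ``$2\,M(n-1,k-1)$'' summand as the royal-path contribution from paths whose first diagonal return occurs at $(1,1)$ (beginning either $EN$ or $D$), and the remaining sum as the contribution from paths whose first diagonal return lies strictly further along, thereby explaining the combinatorial match with \autoref{lemma:royal_recurrence} at the level of the proof, not just the formula.
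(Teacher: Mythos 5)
Your proof is correct and follows essentially the same route as the paper: specialize \autoref{thm:liftable} to $\ell = n+1$, note that only the minimal fibrant elements $(0,0)$ and $(1,n)$ can contribute (the $(0,b)$ terms with $b>0$ die because $[1]\times[n-b]$ has too few candidate extendable elements, and the $(1,b)$ terms with $b<n$ die by liftability), and match the resulting recurrence against \autoref{lemma:royal_recurrence} with base case $M(0,1)=2$. You simply carry out explicitly the step the paper compresses into ``this is a direct consequence of \autoref{thm:liftable} specialized to $\ell=n+1$.''

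One point worth flagging: the recurrence you derive, $M(n,k) = 2M(n-1,k-1) + \sum_{k\leqslant k'\leqslant n} M(n-1,k')$, does not literally match \autoref{lemma:royal_recurrence} as printed, whose first term is $2\mathfrak S_n(k-1)$ rather than $2\mathfrak S_{n-1}(k-1)$. That printed first term is a typo: the lemma's own proof (two choices $EN$ or $D$ followed by a royal $(n-1)$-path with $k-1$ diagonal returns) and the values in \autoref{tab:refined_shroder} both give $2\mathfrak S_{n-1}(k-1)$ (e.g.\ $\mathfrak S_3(2)=8=2\cdot\mathfrak S_2(1)+\mathfrak S_2(2)$, whereas $2\mathfrak S_3(1)+\mathfrak S_2(2)=16$). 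So your version of the recurrence is the correct one, and your identification $M(n,k)=\mathfrak S_{n+1}(k)$ goes through; the paper's own proof quotes the recurrence in the erroneous form, so on this detail your write-up is actually the more accurate of the two.
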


\begin{proof}
It suffices to prove the second statement since the refined Schr\"oder numbers $\mathfrak S_{n+1}(k)$ add up to $\mathfrak S_{n+1}$. Write $\mathfrak m(n,k) := |L(n,k,n+1)|$ for the number of maximally exetndable transfer systems on $D_{p^n}$ with exactly $k$ stationary nodes. Clearly $\mathfrak m(0,1) = 2 = \mathfrak S_1(1)$, so it suffices to prove that $\mathfrak m(n,k)$ satisfies the recurrence of \autoref{lemma:royal_recurrence}, \emph{i.e.},
\[
  \mathfrak m(n,k) = 2\mathfrak m(n,k-1) + \sum_{p=k}^n \mathfrak m(n-1,p).
\]
This is a direct consequence of \autoref{thm:liftable} specialized to the case $\ell = n+1$.
\end{proof}

\begin{remark}
In \cite[Exercise 6.39]{stanley:ec2}, Stanley provides a list of nineteen classical structures counted by Schr\"oder numbers. The challenge of finding an explicit bijection between maximally extendable transfer systems on $D_{p^n}$ and one of these structures remains open.
\end{remark}

\begin{prop}\label{prop:max_asymp}
Let $L^{\mathrm{max}}(n)$ denote the collection of maximally extendable liftable transfer systems on $[1]\times [n]$. The asymptotics of $|L^{\mathrm{max}}(n)|$ satisfy
\[
  |L^{\mathrm{max}}(n)|\sim C\frac{(3+\sqrt{8})^n}{n^{3/2}}
\]
where
\[
  C = \frac{\sqrt{2}(3+2\sqrt{2})}{2\sqrt{\pi(-4+3\sqrt{2})}}\approx 4.720408926.
\]
This provides a lower bound on the asymptotic behavior of $|L(n)|$.
\end{prop}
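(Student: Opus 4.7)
The plan is to reduce the statement to the asymptotics of the large Schröder numbers via \autoref{thm:schroder}, which identifies $|L^{\max}(n)|$ with $\mathfrak S_{n+1}$, and then extract those asymptotics by classical singularity analysis applied to the generating function
$$
  S(x) = \sum_{n \geqslant 0} \mathfrak S_n\, x^n = \frac{1 - x - \sqrt{1 - 6x + x^2}}{2x}.
$$
This closed form is standard and can be derived from the quadratic functional equation $S = 1 + xS + xS^2$ coming from a first-return decomposition of royal paths, which itself refines the recurrence of \autoref{prop:schrodercount}. The lower-bound assertion for $|L(n)|$ is immediate from the inclusion $L^{\max}(n)\subseteq L(n)$, so the substance of the proof is the singular analysis.

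First I would locate the dominant singularity. The discriminant $1 - 6x + x^2$ factors as $(x-\rho)(x-\rho')$ with $\rho = 3 - 2\sqrt{2}$ and $\rho' = 3 + 2\sqrt{2}$, so the radius of convergence of $S$ is $\rho$ and $\rho^{-1} = 3 + 2\sqrt{2} = 3 + \sqrt{8}$, matching the base of the advertised exponential growth. Next I would compute the singular expansion of $S$ at $x = \rho$. Writing
$$
  1 - 6x + x^2 = (\rho - x)\bigl(4\sqrt{2} + (\rho - x)\bigr),
$$
one obtains $\sqrt{1-6x+x^2} = 2\cdot 2^{1/4}\sqrt{\rho - x}\,\bigl(1 + O(\rho - x)\bigr)$, hence
$$
  S(x) = g(x) - \frac{2^{1/4}}{\sqrt{\rho}}\,\sqrt{1 - x/\rho}\,\bigl(1 + O(\rho-x)\bigr)
$$
for some $g$ analytic at $\rho$. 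The transfer theorem for square-root-type singularities then yields
$$
  \mathfrak S_n \;\sim\; \frac{2^{1/4}}{2\sqrt{\pi\rho}}\cdot\frac{\rho^{-n}}{n^{3/2}},
$$
and shifting $n \mapsto n+1$ and using $\rho^{-1} = 3 + 2\sqrt{2}$ produces $\mathfrak S_{n+1}\sim \frac{2^{1/4}(3+2\sqrt{2})^{3/2}}{2\sqrt{\pi}}\cdot \frac{(3+\sqrt{8})^n}{n^{3/2}}$.

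The remaining task is to match this leading coefficient to the advertised $C = \frac{\sqrt{2}(3+2\sqrt{2})}{2\sqrt{\pi(3\sqrt{2}-4)}}$; this is the main obstacle only in the sense of being a finicky algebraic manipulation. Squaring both expressions and clearing denominators reduces the identity to the direct computation $(3+2\sqrt{2})(3\sqrt{2}-4) = \sqrt{2}$, which is straightforward to verify. Beyond this cosmetic rearrangement, one also needs to record the routine observation that $S$ extends analytically to a standard $\Delta$-domain at $\rho$ (trivial here, since the only other singular point of $S$ is $\rho' > \rho$), which ensures that the transfer theorem applies with a unique contributing singularity.
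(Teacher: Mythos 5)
Your proposal is correct and follows the same route as the paper: both reduce the count to $\mathfrak S_{n+1}$ via \autoref{thm:schroder} and then invoke the asymptotics of the large Schr\"oder numbers, with the lower bound on $|L(n)|$ coming from the inclusion $L^{\mathrm{max}}(n)\subseteq L(n)$. The only difference is that the paper cites the literature (Knuth) for those asymptotics, whereas you derive them yourself by singularity analysis of $S(x)=\frac{1-x-\sqrt{1-6x+x^2}}{2x}$; your computation of the dominant singularity $\rho=3-2\sqrt 2$, the square-root expansion, and the algebraic identity $(3+2\sqrt 2)(3\sqrt 2-4)=\sqrt 2$ matching the constant $C$ all check out.
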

\begin{proof}
This follows directly from \autoref{thm:schroder} and the asymptotics of the large Schr\"oder numbers (see for instance Exercise 12 on p.539 of \cite{knuth}).
\end{proof}

We now shift our attention back to all transfer systems on $[1] \times [n]$, and consider the maximally extendable transfer systems here. Recall from \autoref{prop:oddoneout} that there is a duality
\[
T(n,k,\ell,(a,b)) \longleftrightarrow T(n,\ell,k,(1-a,n-b)).
\]
In particular, by summing up over all elements $(a,b) \in [1] \times [n]$ we obtain a duality
\[
T(n,k,\ell) \longleftrightarrow T(n,\ell,k).
\]

\begin{defn}
A transfer system $\RR$ for $[1] \times [n]$ is \emph{maximally stationary} if it has $n+1$ stationary elements.
\end{defn}

\begin{remark}
In a more intuitive description, a transfer system is maximally stationary if and only if its restriction to the top row is the minimal transfer system on $[n]$.
\end{remark}

\begin{corollary}\label{cor:maxswap}
Let $n \geqslant 0$. Then the number of maximally extendable transfer systems in $T(n)$ is the same as the number of maximally stationary transfer systems in $T(n)$ (and indeed the same as the number of maximally stationary transfer systems in $L(n)$).
\end{corollary}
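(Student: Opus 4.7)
The plan is to establish the two claimed equalities in sequence, leveraging the duality of Proposition \ref{prop:oddoneout} for the first and a vacuous-implication argument for the second.

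First, I would sum the bijection $T(n,k,\ell,(a,b))\longleftrightarrow T(n,\ell,k,(1-a,n-b))$ over all $(a,b)\in[1]\times[n]$ and over all values of one of the indices, in order to obtain a bijection $T(n,k,\ell)\longleftrightarrow T(n,\ell,k)$ (as already observed in the paragraph preceding the corollary). Specializing by summing $k$ over $1\leqslant k\leqslant n+1$ with $\ell=n+1$ on the left, and correspondingly summing over $\ell$ with $k=n+1$ on the right, this directly yields that the number of maximally extendable transfer systems in $T(n)$ equals the number of maximally stationary transfer systems in $T(n)$. No additional work is needed for this half beyond invoking \autoref{prop:oddoneout}.

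Second, I would prove that every maximally stationary transfer system in $T(n)$ is automatically liftable, so that the sets of maximally stationary transfer systems in $T(n)$ and in $L(n)$ coincide. By definition, a maximally stationary $\RR$ has all of $(1,0),(1,1),\dots,(1,n)$ stationary, which by the definition of \emph{stationary} means there is no relation $(1,c)\,\RR\,(1,d)$ with $c<d$. The liftable axiom (L) is the conditional ``if $(1,i)\,\RR\,(1,j)$ for some $i<j$ then $(0,i)\,\RR\,(1,i)$,'' whose hypothesis is never satisfied. Thus (L) holds vacuously and $\RR\in L(n)$. Conversely, every maximally stationary transfer system in $L(n)$ lies in $T(n)$ by definition, so the two collections agree.

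Combining the two steps yields the corollary. I do not anticipate any real obstacle: the first equality is immediate from the duality, and the second is a one-line observation about vacuous quantification. The only point requiring a little care is verifying that the summation conventions in \autoref{prop:oddoneout} indeed descend to a bijection $T(n,k,\ell)\leftrightarrow T(n,\ell,k)$ after quotienting out the pivot variable, which is exactly the statement made in the paragraph immediately following that proposition.
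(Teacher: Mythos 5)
Your proposal is correct and follows essentially the same route the paper intends: the first equality is the specialization $\ell=n+1$ of the duality $T(n,k,\ell)\leftrightarrow T(n,\ell,k)$ from \autoref{prop:oddoneout}, and the second follows because maximal stationarity forces the top row to carry no nontrivial relations, so condition (L) holds vacuously. No gaps.
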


\begin{defn}
Let $n \geqslant 1$. Then $\mathfrak{A}_n$ is the number of rooted subtrees in rooted planar tree with $n$ nodes.
\end{defn}

\begin{remark}
Rooted subtrees of a fixed planar tree are in bijection with nonempty antichains in the same tree (with the canonical partial order in which the root is minimal and edges are covering relations). Rooted subtrees are analyzed in \cite{ruskey} while \cite{klazar} studies the same problem in the language of antichains.
\end{remark}

\begin{prop}[\cite{ruskey,klazar}]\label{prop:antichain}
Let $n \geqslant 1$. Then
\[
\mathfrak{A}_n = \dfrac{\displaystyle{\sum_{0 \leqslant i < n}}  \binom{2i+1}{i} \binom{2n-1}{n-i-1}}{2n-1}.
\]
Moreover $\mathfrak{A}_n$ satisfies the recurrence $\mathfrak{A}_1 = 1$ and
\[
\mathfrak{A}_{n} = \sum_{j=1}^{n-1} \mathfrak{A}_{n-j} \mathfrak{A_{j}} + \mathfrak{A}_{n-j} \mathrm{Cat}(j-1) 
\]
where $\mathrm{Cat}(j)$ is the $j$-th Catalan number.

\end{prop}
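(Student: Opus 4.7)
The plan is to establish the recurrence combinatorially via the bijection sketched in the preceding remark, and then either solve the resulting functional equation directly or invoke \cite{ruskey,klazar} for the closed form. First, I would use the bijection between rooted subtrees $S$ of a fixed rooted planar tree $T$ (containing the root and closed under taking ancestors) and nonempty antichains $A \subseteq T$: send $S \mapsto \max(S)$ and, conversely, $A \mapsto \{x \in T \mid x \leqslant a \text{ for some } a \in A\}$. Thus $\mathfrak{A}_n$ equivalently counts pairs $(T,S)$ with $T$ a rooted planar tree on $n$ nodes and $S$ a rooted subtree of $T$.

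For the recurrence, I would use the standard \emph{leftmost-subtree decomposition}: a rooted planar tree $T$ on $n \geqslant 2$ nodes is determined by the pair $(T_1, T')$, where $T_1$ is the subtree rooted at the leftmost child of the root (on $j$ nodes for some $1 \leqslant j \leqslant n-1$) and $T'$ is the rooted planar tree on $n-j$ nodes obtained by deleting $T_1$ (it shares the root of $T$ but has one fewer child of the root). Given a rooted subtree $S$ of $T$, I would split on whether the leftmost child of the root lies in $S$. If it does, then $S \cap T_1$ is a rooted subtree of $T_1$ and $S \cap T'$ is a rooted subtree of $T'$, giving $\mathfrak{A}_j \cdot \mathfrak{A}_{n-j}$ pairs. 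If it does not, then $S \cap T_1 = \varnothing$, so $S$ is determined by a rooted subtree of $T'$ while $T_1$ ranges over all rooted planar trees on $j$ nodes, contributing $\mathrm{Cat}(j-1) \cdot \mathfrak{A}_{n-j}$. Summing over $j$ yields the stated recurrence, and the base case $\mathfrak{A}_1 = 1$ is immediate.

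For the closed formula, I would set $A(x) = \sum_{n \geqslant 1} \mathfrak{A}_n x^n$ and $C(x) = \sum_{n \geqslant 0} \mathrm{Cat}(n) x^n$, translating the recurrence into the functional equation
\[
  A(x)^2 + \bigl(xC(x) - 1\bigr)A(x) + x = 0.
\]
Solving this quadratic in $A(x)$ using $xC(x) = \tfrac{1 - \sqrt{1-4x}}{2}$ produces an algebraic expression whose coefficients can be extracted by Lagrange inversion, yielding the double-binomial sum in the statement.

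The main obstacle is the coefficient extraction for the closed form: the quadratic solves to an expression involving nested radicals, and massaging the resulting generating function into the Motzkin-style form $\tfrac{1}{2n-1}\sum_i \binom{2i+1}{i}\binom{2n-1}{n-i-1}$ requires a careful Lagrange inversion or a direct combinatorial interpretation of the summand (e.g., through a refinement by antichain size and the ballot-style enumeration of \cite{klazar}). The cleanest route is simply to invoke the analyses of \cite{ruskey,klazar}, where exactly this enumeration is carried out; the recurrence above then serves as an independent verification consistent with their formula.
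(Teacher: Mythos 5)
Your proposal is correct, and it actually does more than the paper, which offers no proof at all: \autoref{prop:antichain} is stated with the attribution \cite{ruskey,klazar} and both the closed form and the recurrence are simply imported from those references. Your leftmost-subtree decomposition argument for the recurrence is complete and sound: splitting on whether the leftmost child of the root lies in the rooted subtree $S$ cleanly yields the two terms $\mathfrak{A}_j\mathfrak{A}_{n-j}$ and $\mathrm{Cat}(j-1)\mathfrak{A}_{n-j}$, since in the second case $T_1$ is unconstrained (there are $\mathrm{Cat}(j-1)$ planar trees on $j$ nodes) while $S$ reduces to a rooted subtree of $T'$; the cases are exhaustive and disjoint, all restricted subtrees are nonempty, and the result checks against the table ($7 = 4+3$ for $n=3$, $29 = 14+6+9$ for $n=4$). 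Your functional equation $A(x)^2 + (xC(x)-1)A(x) + x = 0$ is also the correct translation of the recurrence (the $n=1$ coefficient forces the inhomogeneous term $x$). The one piece you leave incomplete --- extracting the double-binomial closed form from the resulting nested radical --- is exactly the piece the paper also delegates to \cite{ruskey,klazar}, so invoking them there is entirely consistent with the paper's own treatment. In short, you supply a genuine combinatorial proof of the recurrence where the paper gives only a citation, and you defer the same computation the paper defers.
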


\begin{computation}\label{com:antichain}
\autoref{tab:antichain} provides a calculation of $\mathfrak{A}_n$ for small values of $n$.
\begin{table}[h]
\begin{tabular}{|c|c|}
\hline
\rowcolor[HTML]{C0C0C0} 
\textbf{$n$} & \textbf{$\mathfrak{A}_n$} \\ \hline
1            & 1                 \\ \hline
2            & 2                 \\ \hline
3            & 7                 \\ \hline
4            & 29                \\ \hline
5            & 131               \\ \hline
6            & 625              \\ \hline
7            & 3099             \\ \hline
8            & 15818            \\ \hline
9            &  82595          \\ \hline
10            & 439259          \\ \hline
\end{tabular}
\vspace{3mm}
\caption{Values of $\mathfrak{A}_n$ computed via \autoref{prop:antichain}.}\label{tab:antichain}
\end{table}
\end{computation}

Numerical experiments lead us to conjecture that maximally extendable transfer systems in $T(n)$ are counted by $\mathfrak A_{n+2}$. By \autoref{cor:maxswap}, we conjecture the same count for maximally stationary transfer systems in $T(n)$ and maximally stationary transfer systems in $L(n)$.

\begin{conjecture}
For $n \geqslant 0$, the number of maximally extendable transfer systems in $T(n)$ is given by $\mathfrak{A}_{n+2}$.
\end{conjecture}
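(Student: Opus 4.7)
By \autoref{cor:maxswap}, it suffices to prove $M(n) := |L(n,n+1)| = \mathfrak{A}_{n+2}$, where $L(n,n+1)$ denotes the set of maximally stationary liftable transfer systems on $[1]\times[n]$. I will proceed by induction with base case $M(0) = 2 = \mathfrak{A}_2$ and the convention $M(-1) := 1 = \mathfrak{A}_1$. The strategy is to derive a recursion for $M(n)$ from the $\odot$-decomposition of \autoref{sec:strategy} and match it with the recursion of \autoref{prop:antichain} for $\mathfrak{A}_{n+2}$.

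Decompose a maximally stationary $\RR \in L(n,n+1)$ by its minimal fibrant element $(a,b)$. Since $\RR$ has no top-row relations, \autoref{prop:sum3} shows that $(1,b)$ cannot be fibrant for $b<n$, leaving the cases $(0,0)$, $(0,b)$ with $1\leqslant b\leqslant n$, and $(1,n)$. Specializing \autoref{thm:liftable} to $k = n+1$ and summing over $\ell$, one obtains $M_{(0,0)}(n) = M(n-1)$ and, for $1 \leqslant b \leqslant n$,
\[
M_{(0,b)}(n) = |\mathrm{Tam}(b-1,b)| \cdot M(n-b-1) = \mathrm{Cat}(b)\cdot M(n-b-1),
\]
using the major-diagonal identity $|\mathrm{Tam}(m,m+1)| = \mathrm{Cat}(m+1)$ evident in \autoref{tab:master} (the empty-sum behaviour of the Tamari recursion of \autoref{prop:sum1} forces $i = b$). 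Summing gives
\[
M(n) \;=\; M(n-1) + \sum_{b=1}^{n} \mathrm{Cat}(b)\cdot M(n-b-1) + M_{(1,n)}(n).
\]

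The crux is to establish the convolution identity
\[
M_{(1,n)}(n) \;=\; M(n-1) + \sum_{j=1}^{n} M(n-1-j)\cdot M(j-1). \qquad(\star)
\]
By \autoref{thm:liftable}, $M_{(1,n)}(n) = \sum_{\ell\geqslant 1}(\ell+1)\,|L(n-1,n,\ell)|$, which counts transfer systems on the meet-subsemilattice $P(n) := [1]\times[n]\smallsetminus\{(1,n)\}$ that have no top-row relations. In generating-function form, $(\star)$ asserts $\hat{M}(x) = (1+xF(x))^2$, where $F(x) = \sum_n M(n) x^n$ and $\hat{M}(x) = \sum_n M_{(1,n)}(n)\,x^n$, so I seek a bijection between transfer systems on $P(n)$ and triples $(j, \RR_L, \RR_R)$ with $j\in\{0,1,\ldots,n\}$, $\RR_L$ a maximally stationary liftable transfer system on $[1]\times[j-1]$, and $\RR_R$ the analog on $[1]\times[n-j-1]$ (the convention $M(-1)=1$ handling the empty boundary cases). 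The split index $j$ should correspond to a canonical distinguished bot-row element $(0,j)$ separating the two halves. The main obstacle is that diagonal relations $(0,i)\RR(1,k)$ on $P(n)$ can cross any naive cut: the bijection must simultaneously determine $j$ and encode crossing diagonals in the left/right pieces, likely by an interior second round of $\odot$-decomposition applied to the distinguished element. Granting $(\star)$, the recursion above expands to
\[
M(n) = 3M(n-1) + \mathrm{Cat}(n) + \sum_{j=1}^{n-1} M(n-1-j)\bigl(M(j-1) + \mathrm{Cat}(j)\bigr),
\]
which is exactly the recursion for $\mathfrak{A}_{n+2}$ produced by substituting $\mathfrak{A}_{k+2} = M(k)$ (with $\mathfrak{A}_1 = 1 = M(-1)$) into \autoref{prop:antichain}; this closes the induction. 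A more ambitious alternative, supported by matching numerology for small $n$, would bypass $(\star)$ entirely by constructing an explicit bijection between $L(n,n+1)$ and pairs $(T,S)$ consisting of a rooted planar tree $T$ on $n+2$ nodes and a rooted subtree $S\subseteq T$; here the bot-row transfer system would determine $T$ via the standard Catalan bijection $\mathsf{Tr}([n]) \cong \{\text{planar trees on } n+2 \text{ nodes}\}$, and the vertical/diagonal data of $\RR$ would encode $S$.
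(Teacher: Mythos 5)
This statement is a \emph{conjecture} in the paper: the authors give no proof, and they explicitly record that their own bijective and inductive attempts were ``spoiled by the subtle effects of restriction on relations joining the bottom and top rows.'' Your attempt runs into exactly that wall. The reduction you carry out before the crux is sound: passing to maximally stationary liftable transfer systems via \autoref{cor:maxswap}, the identities $M_{(0,0)}(n)=M(n-1)$ and $M_{(0,b)}(n)=\mathrm{Cat}(b)\,M(n-b-1)$ follow correctly from \autoref{thm:liftable} with $k=n+1$ (only $i=b$ survives in the Tamari sum), and granting $(\star)$ the resulting recursion does match the recurrence for $\mathfrak{A}_{n+2}$ in \autoref{prop:antichain} after reindexing; I also confirmed $(\star)$ numerically for $n=1,2$. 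But $(\star)$ itself --- the convolution identity $\sum_{\ell}(\ell+1)\,|L(n-1,n,\ell)| = \sum_{j=0}^{n}M(j-1)\,M(n-1-j)$, equivalently $\hat M(x)=(1+xF(x))^2$ --- is precisely where the content of the conjecture lives, and you do not prove it: you name the obstacle (diagonal relations $(0,i)\,\RR\,(1,k)$ crossing any candidate cut at $(0,j)$) and then write ``Granting $(\star)$.'' A weighted count of transfer systems by $(\ell+1)$ is not produced by the $\odot$-decomposition of \autoref{sec:strategy} in any way you have exhibited, and the needed bijection onto triples $(j,\RR_L,\RR_R)$ is only gestured at.

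So the proposal is a (correct and potentially useful) reformulation of the conjecture, not a proof of it. To close the gap you would need either an actual construction establishing $(\star)$ --- which amounts to the ``primary component'' factorization $w(r)=\prod_i(1+w(r_i))$ that the authors say they could not obtain --- or the explicit bijection with rooted subtrees of planar trees sketched in your final sentence, which is likewise not carried out. As written, the argument does not establish the statement.
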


The authors' attempts to prove this conjecture by standard enumerative methods (both bijective and inductive) have been spoiled by the subtle effects of restriction on relations joining the bottom and top rows. Following the argument of \cite{klazar}, we note that it would suffice to decompose the top row $r$ of each maximally extendable transfer system in $T(n)$ into ``primary components'' $r_1,\ldots,r_k$ for which
\[
  w(r) = \prod_{i=1}^k (1+w(r_i))
\]
where $w(s)$ measures the number of maximally extendable transfer systems with top row $s$.

\subsection{Figures}
We conclude by plotting of the behavior of our enumerations related to transfer systems for $D_{p^n}$ and $C_{qp^n}$. All of the data for these figures was produced using code available at \url{https://github.com/bifibrant/recursion/}.

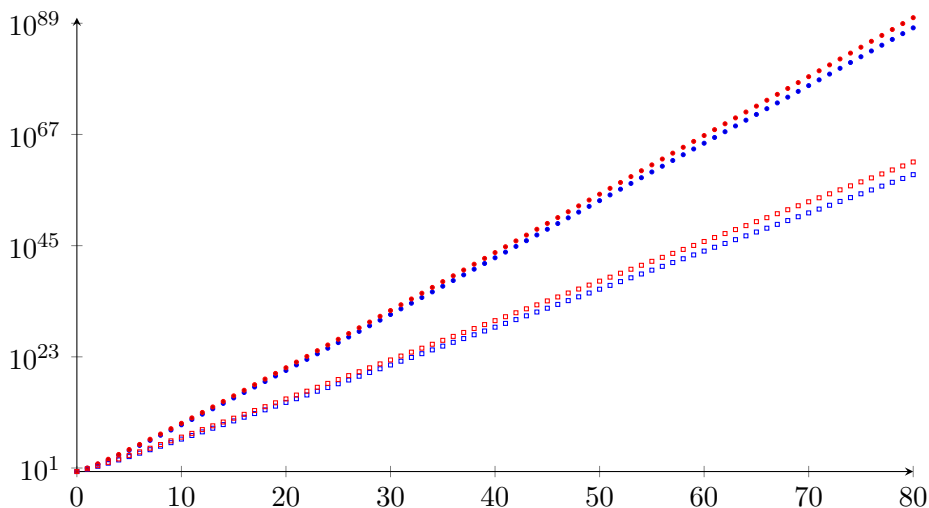
\begin{figure}[h]
\center
\begin{tikzpicture}
\begin{semilogyaxis}[
  width=5in,
  height=3in,
  mark size=0.75pt,
  axis lines=left,
  scaled ticks=false,
  only marks,
]
  \addplot table {L.dat};
  \addplot+ [mark=*] table {T.dat};
  \addplot+ [mark=square,mark options={draw=blue,fill=white}] table {Lmax.dat};
  \addplot+ [mark=square,mark options={draw=red,fill=white}] table {Tmax.dat};
\end{semilogyaxis}
\end{tikzpicture}
\caption{A semilog plot of the values of $|L(n)|$ (blue dots), $|T(n)|$ (red dots), $|L^{\mathrm{max}}(n)|$ (blue squares), and $|T^{\mathrm{max}}(n)|$ (red squares) for $0\leqslant n\leqslant 80$. Here the superscript $\mathrm{max}$ indicates maximally extendable transfer systems.}\label{fig:values}
\end{figure}

The graph in \autoref{fig:values} suggests that $|L(n)|$ and $|T(n)|$ might have subexponential growth rates of a shape similar to the asymptotics of $|L^{\mathrm{max}}(n)|$ from \autoref{prop:max_asymp}.

Finally, in \autoref{fig:ratio} we plot $|L(n)|/|T(n)|$ and observe that the fraction of liftable transfer systems appears to approach $0$.

\begin{figure}[h]
\center
\begin{tikzpicture}
\begin{axis}[
  width=5in,
  height=3in,
  mark size=0.75pt,
  axis lines=left,
  scaled ticks=false,
  only marks,
]
  \addplot table {ratio_LT.dat};
\end{axis}
\end{tikzpicture}
\caption{A (nonlogarithmic) plot of $|L(n)|/|T(n)|$ for $0\leqslant n\leqslant 80$. The semilogarithmic plot appears approximately linear.}\label{fig:ratio}
\end{figure}
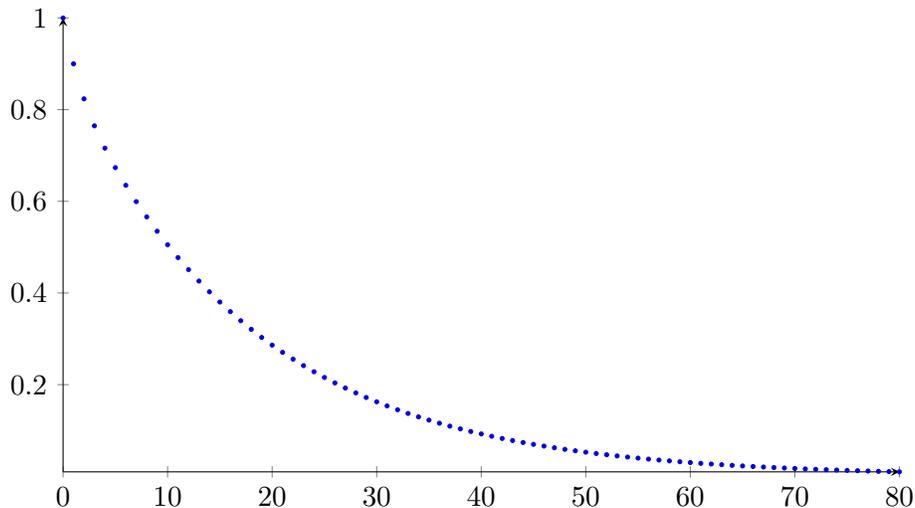



\bibliography{mfgroups}
\bibliographystyle{alpha}

\end{document}